\documentclass[11pt]{amsart}
\usepackage{latexsym,amsfonts,amssymb}
\usepackage{amsmath}
\usepackage{amsthm}

\usepackage{psfrag}
\usepackage{epsfig}
\usepackage{pb-diagram}
\usepackage{amscd}

\usepackage{graphicx} 
\graphicspath{{./Graphics/}}
\newtheorem{lemma}{Lemma}
\newtheorem{definition}{Definition}
\newtheorem{theorem}{Theorem}
\newtheorem*{maintheorem}{Main Theorem}
\newtheorem{cor}{Corollary}
\usepackage{fullpage}
\usepackage{setspace}
\numberwithin{figure}{section}
\numberwithin{definition}{section}
\numberwithin{lemma}{section}
\numberwithin{theorem}{section}
\numberwithin{cor}{section}
\numberwithin{table}{section}
\onehalfspacing

\begin{document}

\title{The Bar-Natan skein module of the solid torus and the homology of $(n,n)$ Springer varieties}
\author{Heather M. Russell}
\address{Department of Mathematics, University of Iowa, Iowa City}
\email{hrussell@math.uiowa.edu}
\date{}
\subjclass{}
\keywords{}

\begin{abstract}
This paper establishes an isomorphism between the Bar-Natan skein module of the solid torus with a particular boundary curve system and the homology of the $(n,n)$ Springer variety. The results build on Khovanov's work with crossingless matchings and the cohomology of the $(n,n)$ Springer variety. We also give a formula for comultiplication in the Bar-Natan skein module for this specific three-manifold and boundary curve system. 
\end{abstract}
\maketitle

\section{Introduction}
In \cite{K1}, Khovanov introduces a graded ring $H^n$, and in \cite{K2}, he proves that the center of this ring is isomorphic to the cohomology of the $(n,n)$ Springer variety of complete flags in $\mathbb{C}^{2n}$ fixed by a nilpotent matrix with two Jordan blocks of size $n$. In order to establish this result, Khovanov appeals to a topological space $\widetilde{S}$ which he proves has cohomology isomorphic to the cohomology of the $(n,n)$ Springer variety.

The Bar-Natan skein module of three-manifolds arose from  Bar-NatanÕs study of the Khovanov homology of tangles and cobordisms. In our work, we consider the Bar-Natan skein module of the solid torus with boundary curve system $2n$ copies of the longitude. The main goal of this paper is to use the work done in \cite{K2} and \cite{K1} to prove that the Bar-Natan skein module in this context is isomorphic to the homology of the $(n,n)$ Springer variety. We accomplish this by working with the space $\widetilde{S}$.

Section \ref{prelim} gives introductory definitions pertaining to the Bar-Natan skein module and the theory of crossingless matchings. Section \ref{bnmod} analyzes the structure of the Bar-Natan skein module in our specific context. Section \ref{space} analyzes the structure of the space $\widetilde{S}$. Section \ref{isom} establishes the isomorphism. Section \ref{alg} discusses some additional algebraic properties of the Bar-Natan skein module.

\section{Preliminaries}\label{prelim}

\subsection{The Bar-Natan skein module}

Fix the Frobenius extension $\mathcal{A} = \mathbb{Z}[x]/(x^2)$ over $\mathbb{Z}$. A dot represents the element $x\in \mathcal{A}$, while the absence of a dot represents $1\in \mathcal{A}$. 

\begin{definition}
Let $M$ be a three-manifold, possibly with boundary. A marked surface in $M$ is a surface $S$ properly embedded in $M$ decorated with some number of dots. We regard such marked surfaces up to isotopy.  Dots can move within connected components but cannot change components.
\end{definition} 

The Bar-Natan relations on marked surfaces in a three-manifold $M$ are given below. The first two relations apply only to spheres which bound balls. The fourth relation, known as the neck-cutting relation, requires presence of a compressing disk.
\begin{align*}
 &\text{(S1)} \hspace{.5in}\raisebox{-6pt}{\includegraphics[width=.25in]{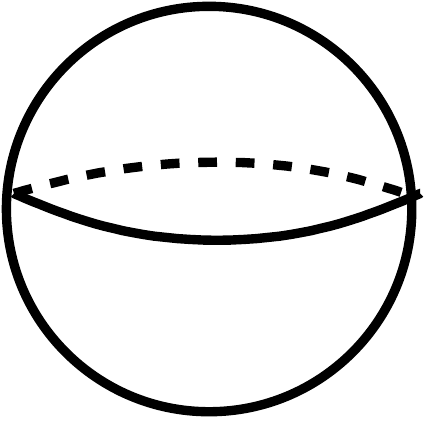}} \sqcup S  = 0 \\
 &\text{(S2)} \hspace{.5in}\raisebox{-6pt}{\includegraphics[width=.25in]{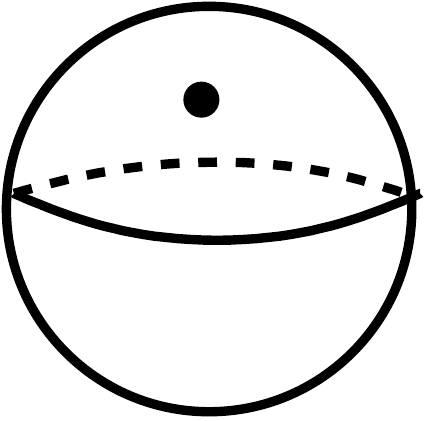}} \sqcup S - S = 0 \\
 &\text{(TD)} \hspace{.5in}\includegraphics[width=.42in]{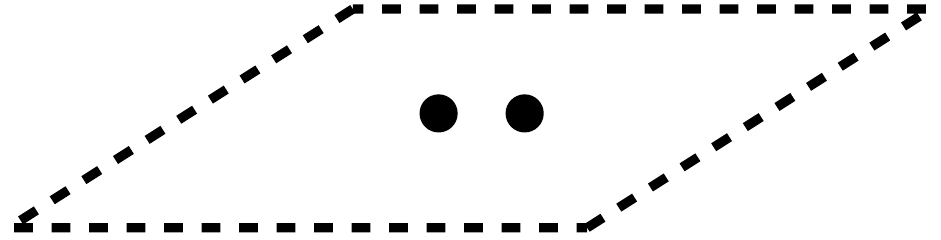} = 0 \\
 &\text{(NC)} \hspace{.5in}\raisebox{-12pt}{\includegraphics[height=.4in]{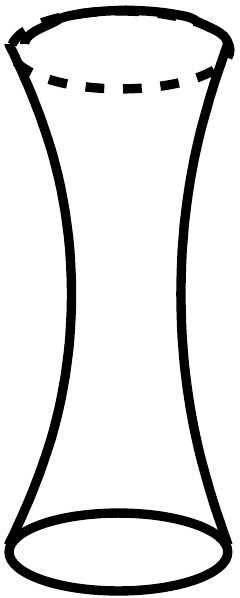}} - \raisebox{-12pt}{\includegraphics[height=.4in]{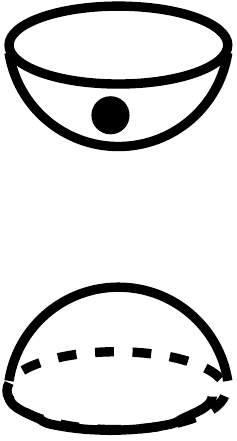}} - \raisebox{-12pt}{\includegraphics [height=.4in]{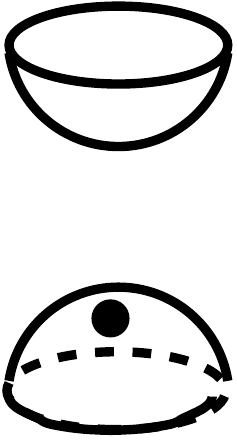}} = 0
\end{align*}
\noindent The following definition makes use of these relations.
\begin{definition}[Bar-Natan skein module]
Let $M$ be a 3-manifold. Let $c$ be a system of simple closed curves properly embedded in the boundary of $M$. Define  $\mathcal{F}(M, c)$ to be the set of isotopy classes of orientable, marked surfaces embedded in $M$ with boundary $c$. Denote the free $\mathbb{Z}$-module with basis $\mathcal{F}(M,c)$ by $\mathbb{Z} \mathcal{F}(M,c)$. Let $\mathcal{S}(M,c)$ denote the submodule of $\mathbb{Z} \mathcal{F}(M,c)$ spanned by the Bar-Natan skein relations. Finally, define $\mathcal{BN}(M,c, \mathbb{Z})$ to be the quotient of $\mathbb{Z} \mathcal{F}(M,c)$ by $\mathcal{S}(M,c)$. This is the Bar-Natan skein module of $M$ with curve system $c$.
\end{definition}

Given a marked surface $S\in \mathcal{BN}(M, c,\mathbb{Z})$, the degree of $S$, denoted $deg(S)$,  is $2d-\chi (S)$ where $d$ is the number of dots on $S$ and $\chi$ is the usual Euler characteristic. This notion of degree gives $\mathbb{Z} \mathcal{F}(M, c)$ a graded $\mathbb{Z}$-module structure.  

Since Bar-Natan relations preserve degree, this structure descends to the quotient $\mathcal{BN}(M, c, \mathbb{Z})$. In other words,  $\mathcal{BN}(M, c, \mathbb{Z})=\bigoplus _{n\in \mathbb{N} } S_n$ where each $S_n$ is generated by surfaces of degree $n$ modulo the Bar-Natan relations.
The graded rank of $\mathcal{BN}(M, c, \mathbb{Z})$ is its Poincar\'{e} polynomial $$p(M,c) = \sum_{n\in \mathbb{N} } rk(S_n)q^{n}.$$

Let $\mathcal{F}_{inc}(M, c)$ be the subset of $\mathcal{F}(M, c)$ consisting only of marked, incompressible surfaces. In \cite{AF}, it is shown that $\mathcal{BN}(M,c,\mathbb{Z})$ is generated by $\mathcal{F}_{inc}(A,c)$. In \cite{UK}, Kaiser proves that every relation between elements of $\mathcal{F}_{inc}(M,c)$ is the result of applying Bar-Natan relations to two different sequences of compressions of some surface in $\mathcal{F}(M,c)$.  

\subsection{Crossingless Matchings}
Our work and Khovanov's results from \cite{K2} and \cite{K1} rely on properties of crossingless matchings, so we recall them here. 

\begin{definition}A crossingless matching on $2n$ nodes is a collection of $n$ disjoint arcs which connect the $2n$ nodes pairwise. Let $B^n$ be the set of all crossingless matchings on $2n$ nodes. When $n$ is understood, this is shortened to $B$. 
 \end{definition}
 \begin{figure}[h]
\center{\includegraphics[height=.2in]{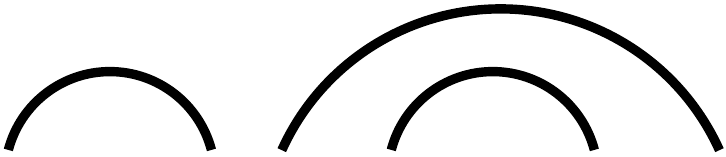}
\hspace{.3in}
\includegraphics[height=.21in]{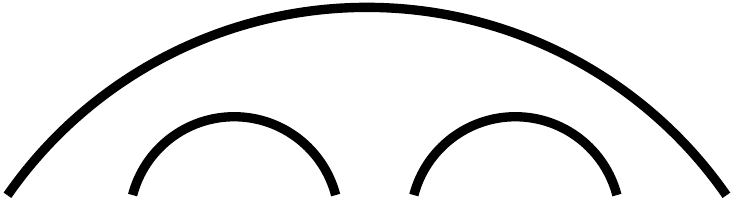}}
\caption{Two crossingless matchings on 6 nodes}
\end{figure} 
 
 Nodes are enumerated $1, \ldots , 2n$ from left to right. An arc connecting nodes $i<j$ will be denoted $(i,j)$. Given a matching $a\in B$, $(i,j)\in a$ if the arc $(i,j)$ is present in $a$. Requiring that a matching $a$ be crossingless is equivalent to there being no quadruple $i<j<k<l$ with $(i,k), (j,l) \in a$.  

\begin{definition}
An outermost arc in a matching is an arc of the form $(i, i+1)$.
\end{definition}

The following two definitions, due to Khovanov, are given in \cite{K1}.

\begin{definition}\label{order}
Define an order on $B$ as follows. 
\begin{itemize}
\item For $a,b \in B$, define the relation $a \rightarrow b$ if and only if there exist $1\leq i < j<k<l \leq n$ with $(i,j), (k,l) \in a$; $(i,l), (j,k)\in b$; and all other arcs in $a$ and $b$ identical. \\
\item Define a partial order on $B$ by $a \prec b$ if and only if there is a chain of arrows $a=a_0 \rightarrow a_1 \rightarrow \cdots \rightarrow a_{m-1} \rightarrow a_m =  b$. \\
 \item Complete this partial order to a total order, and denote it $<$. 
 \end{itemize}
 \end{definition}
 
\begin{definition}\label{dist}
Given two matchings $a, b \in B$, define the distance $d(a,b)$ between $a$ and $b$ to be the minimal length $m$ of a sequence $(a=a_0, a_1, \ldots , a_m = b)$ such that for each $i$, either $a_i \rightarrow a_{i+1}$ or $a_{i+1}\rightarrow a_i$.
\end{definition}

Given such a sequence for matchings $a$ and $b$, it is clear that $d(a_i, b) = d(a,b) - i$. Otherwise, a shorter sequence for $a$ and $b$ would exist, violating the definition of distance.  
 
\section{The solid torus with boundary curve system $c_{2n}$}\label{bnmod}

\subsection{Characterizing the generators}
Let $A$ be the standard planar annulus. The three-manifold $A\times I$ is the solid torus. Let $c_m$ be a collection of $m$ disjoint copies of the longitude of the solid torus. By isotopy, consider these curves to be embedded in $A\times \{ 0\}$, which we regard as the``bottom" of the torus. 

Recall that $\mathcal{F}_{inc}(A\times I,c_m)$, the subset of $\mathcal{F}(A\times I,c_m)$ consisting of marked, incompressible surfaces, generates the Bar-Natan skein module. A standard fact in three-manifold theory is that all closed surfaces in the solid torus are compressible and orientable, so no surface in $\mathcal{F}_{inc}(A\times I,c_m)$ has closed components. Let $S$ be a surface in $A\times I$ with boundary $c_m$ and no closed components. Then $S$ is incompressible if and only if it is the disjoint union of annuli. Since an annulus has 2 boundary components, $\mathcal{F}_{inc}(A\times I,c_m)=\emptyset$  when $m$ is odd. We refer to incompressible surfaces in $A \times I$ with boundary $c_{2n}$ as annular configurations. 

There are $C_n$ annular configurations associated to the curve system $c_{2n}$ where $C_n  = \frac{1}{n+1} \binom{2n}{n}$, the $nth$ Catalan number. According to the relation TD, each connected component of an annular configuration in $\mathcal{F}_{inc}(A\times I, c_{2n})$ can have zero or one dot. Considering all possibilities, $\mathcal{F}_{inc}(A\times I, c_{2n})$ has $$C_n + \binom{n}{2} C_n + \binom {n}{3} C_n + \cdots + \binom{n}{n-1}C_n + \binom{n}{n}C_n = C_n\sum_{i=0}^{n} \binom{n}{i}= 2^nC_n$$ elements. 

Another common characterization of $C_n$ is the number of crossingless matchings on $2n$ nodes. There is a one-to-one correspondence between annular configurations with boundary $c_{2n}$ and crossingless matchings on $2n$ nodes. Given a matching, rotating around a disjoint, vertical axis will generate the associated annular configuration. By allowing arcs in a matching to carry dots, this correspondence extends to all marked annular configurations.

 \subsection{Characterizing the relations}
 
The goal of this section is to present a simple generating set for all relations between marked, incompressible surfaces as elements of the Bar-Natan skein module. 

\begin{definition}
Two annuli in a configuration are adjacent if there exists an arc in their complement connecting them and intersecting no other annuli in the configuration. 
\end{definition}

After tubing two adjacent annuli, the core of that tube gives a trivial compressing disk for the resulting surface; this is trivial in the sense that compression along that disk recovers the initial surface.  Relations between marked, incompressible surfaces arise in the presence of two or more non-isotopic compressing disks.  
 
The simplest case where relations occur is $n=2$. The curve system $c_{4}$ bounds two different incompressible surfaces in $A\times I$: the unnested and nested configurations. 
 
 \begin{figure}[h]
 \center{\includegraphics[width=2.1in]{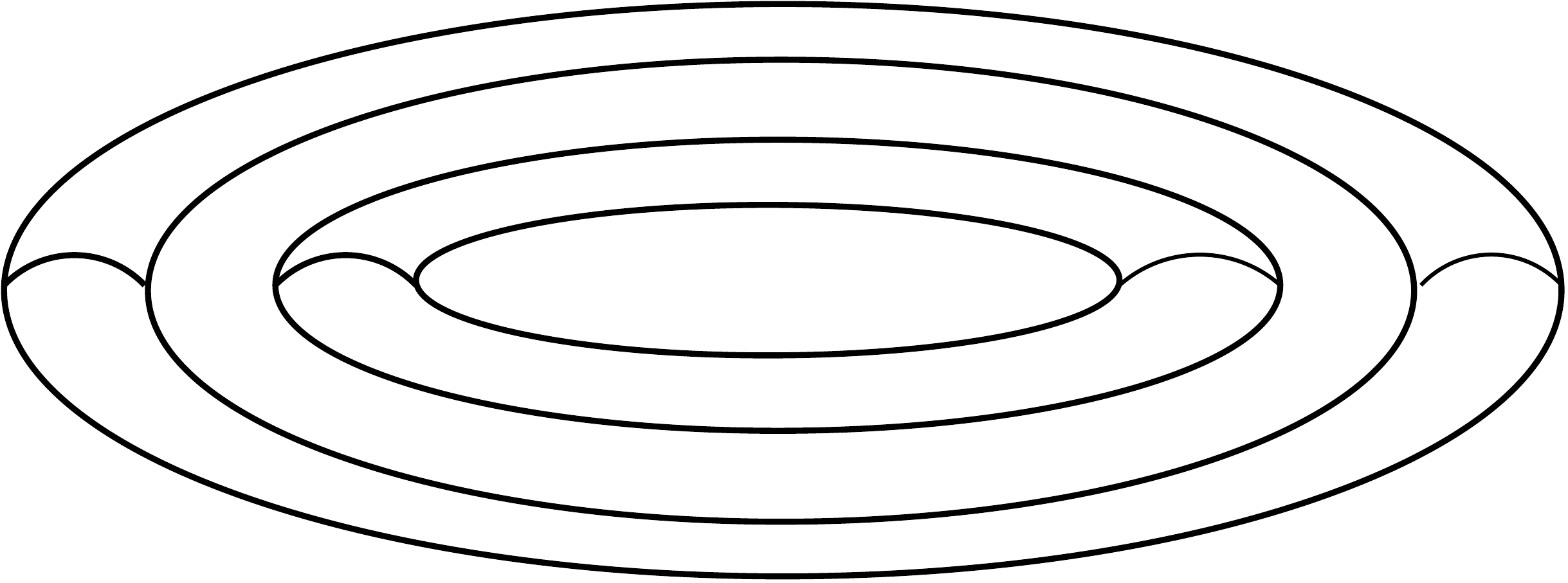}
 \hspace{.5in}
 \includegraphics[width=1.95in]{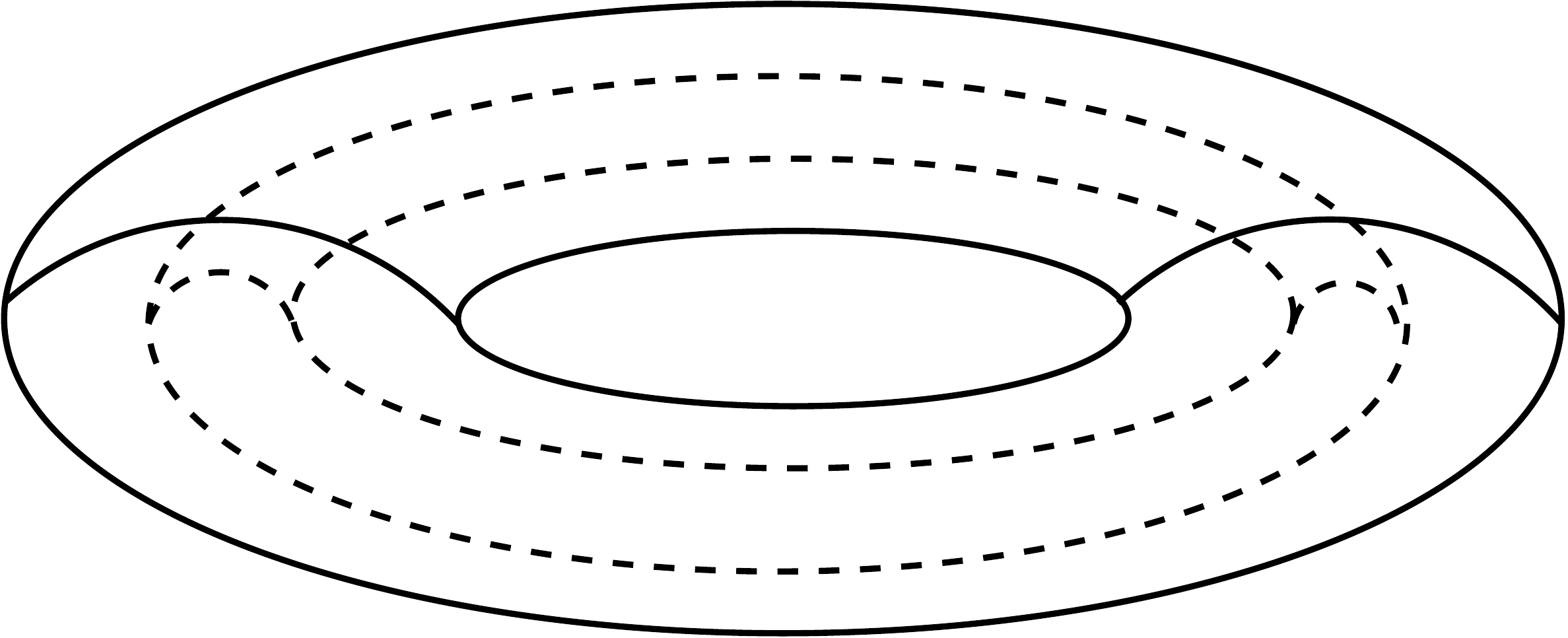}}
 \caption{The unnested and nested configurations}
 \end{figure}
 
The surface obtained by tubing the annuli in the unnested configuration using a standard, unknotted tube, has two non-isotopic classes of compressing disks (see Figure \ref{disks}).
 
 \begin{figure}[here]
 \center{\includegraphics[width=2in]{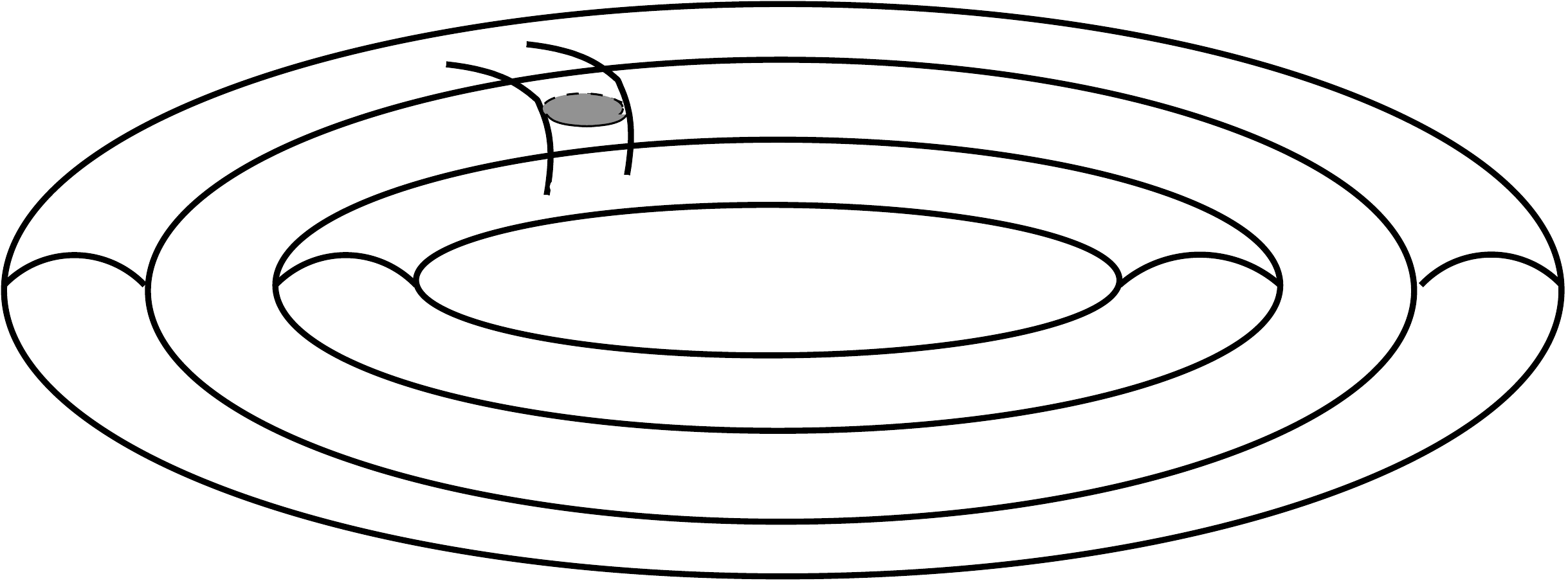}
 \hspace{.5in}
\raisebox{-3pt}{ \includegraphics[width=2.16in]{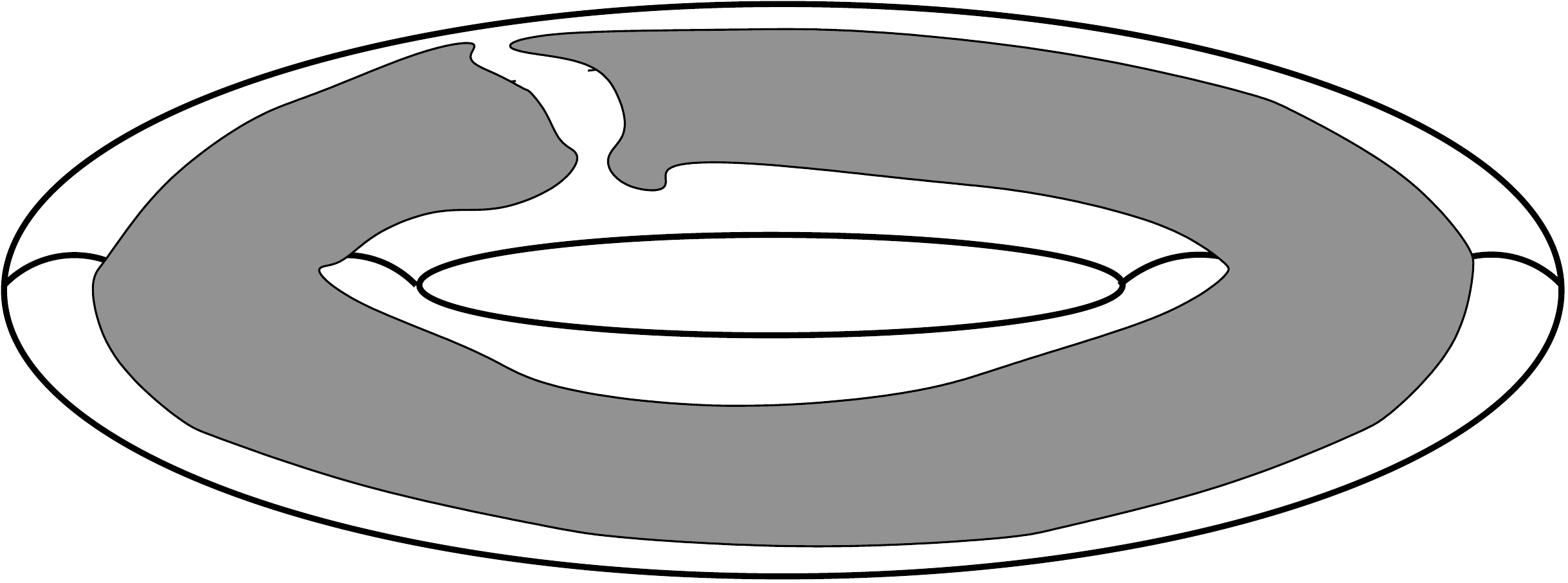}}}
 \caption{Trivial and nontrivial compressing disks.}
 \label{disks}
 \end{figure}
 
As previously stated, compression along the trivial disk gives the unnested configuration. Compression along the nontrivial disk, however, gives the nested configuration.  Applying NC to both compressing disks in the tubed, unnested configuration with no dots results in the following relation, called a Type I relation.
\begin{equation}\label{type1}
 \raisebox{-5pt}{\includegraphics[width=1in]{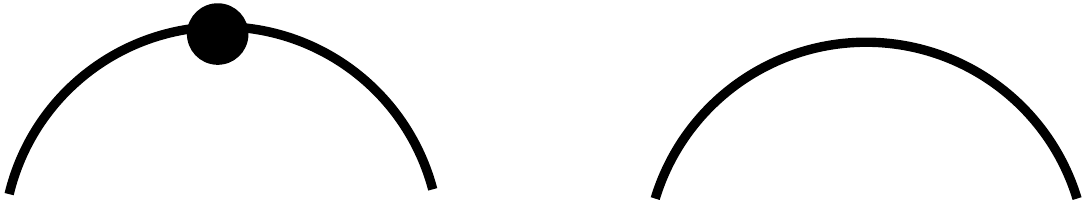}} + \raisebox{-5pt}{\includegraphics[width=1in]{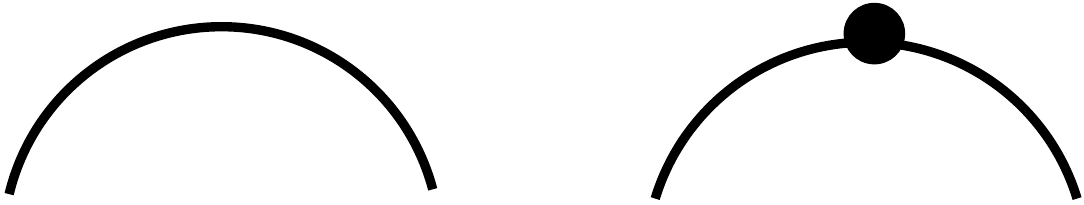}} \hspace{.1in} = \hspace{.1in} \raisebox{-10pt}{ \includegraphics[width=1in]{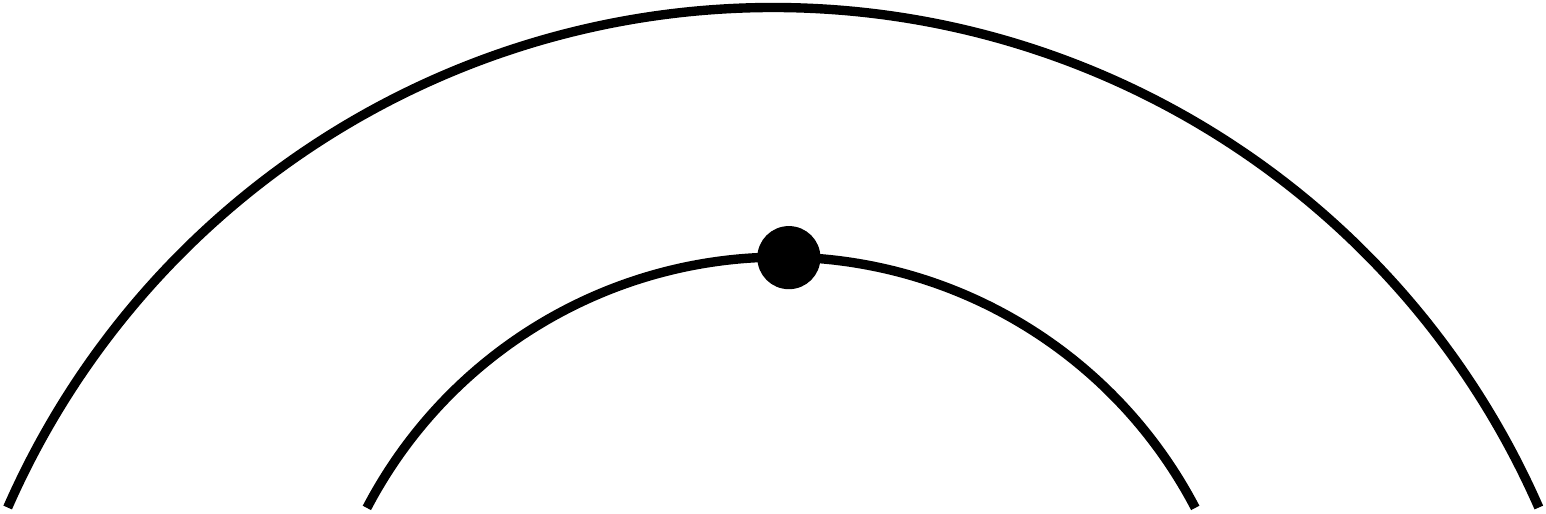}} + \raisebox{-10pt}{\includegraphics[width=1in]{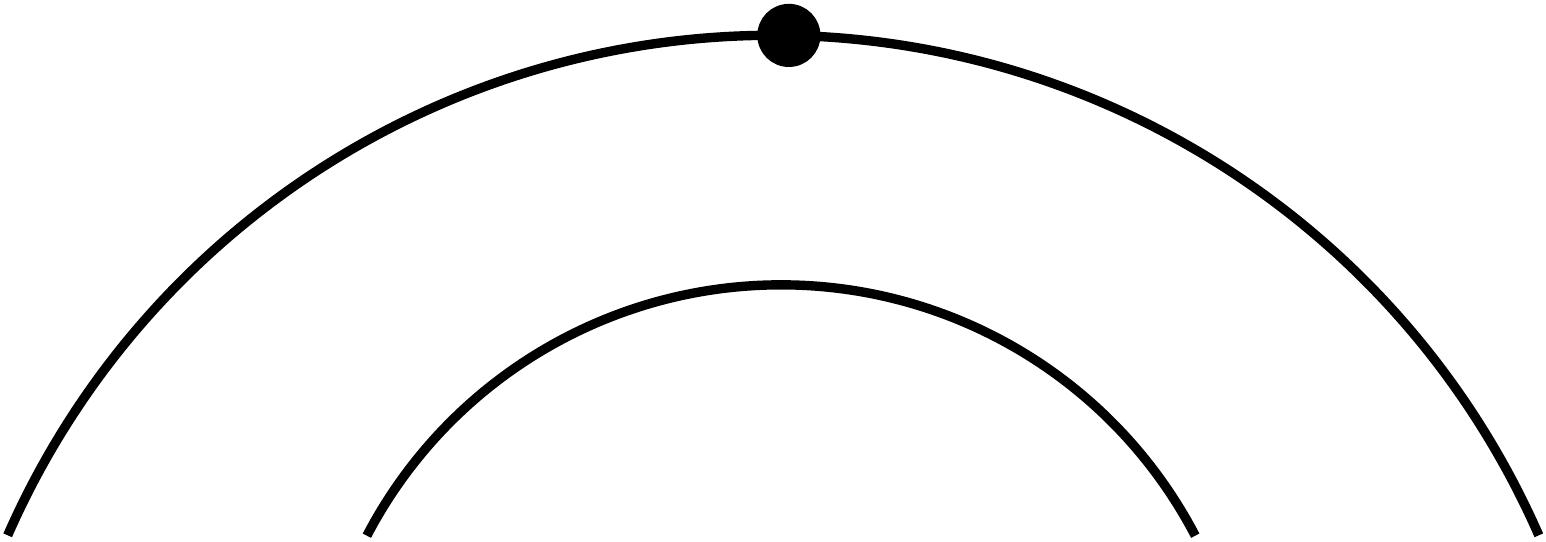}}
\end{equation}
  
Performing the same compressions in the unnested, tubed configuration with one dot results in the following relation, called a Type II relation.
\begin{equation*}
\raisebox{-5pt}{\includegraphics[width=1in]{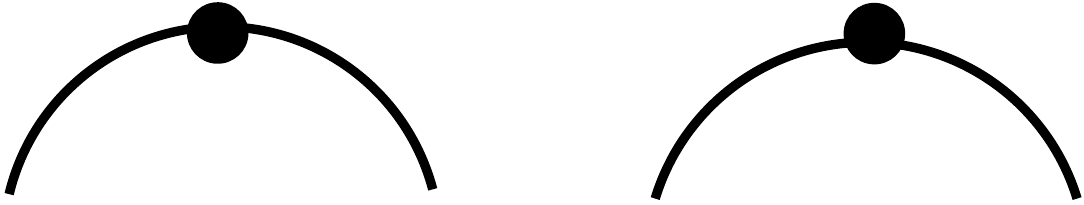}} + \raisebox{-5pt}{\includegraphics[width=1in]{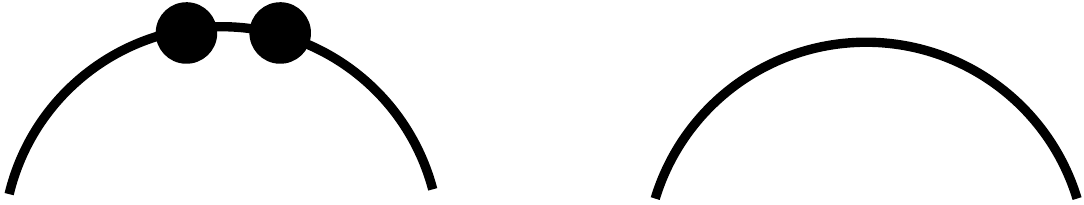}} \hspace{.1in} = \hspace{.1in} \raisebox{-10pt}{ \includegraphics[width=1in]{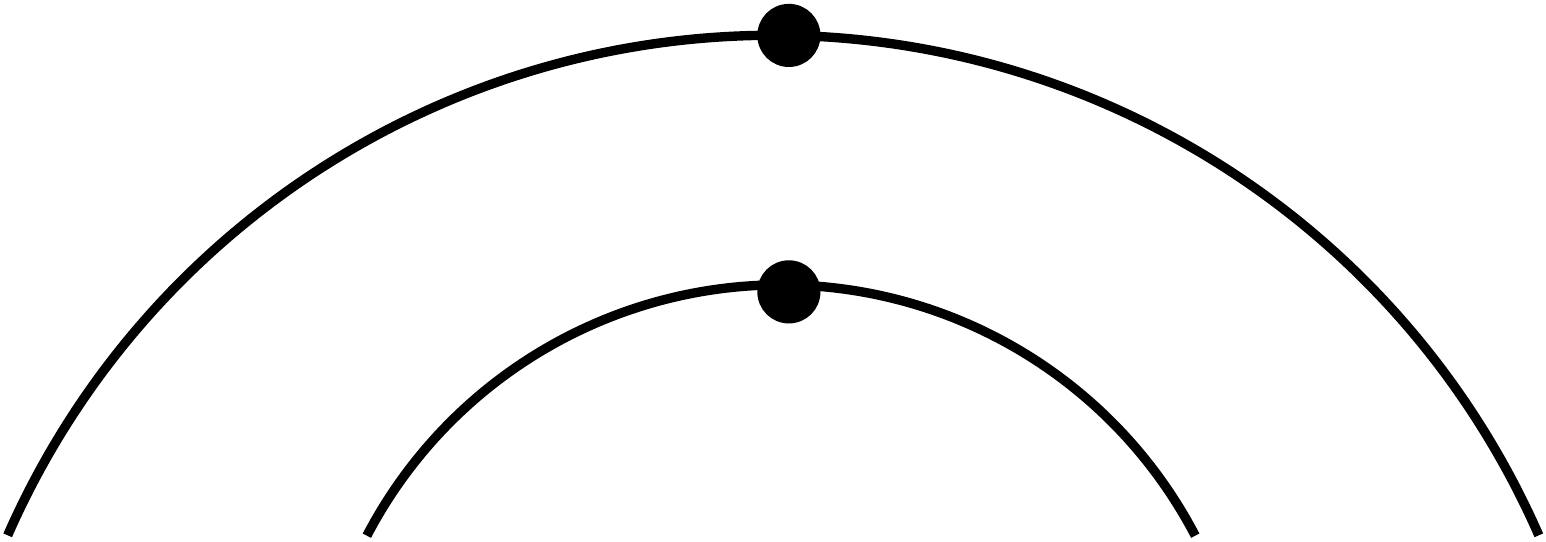}} + \raisebox{-10pt}{\includegraphics[width=1in]{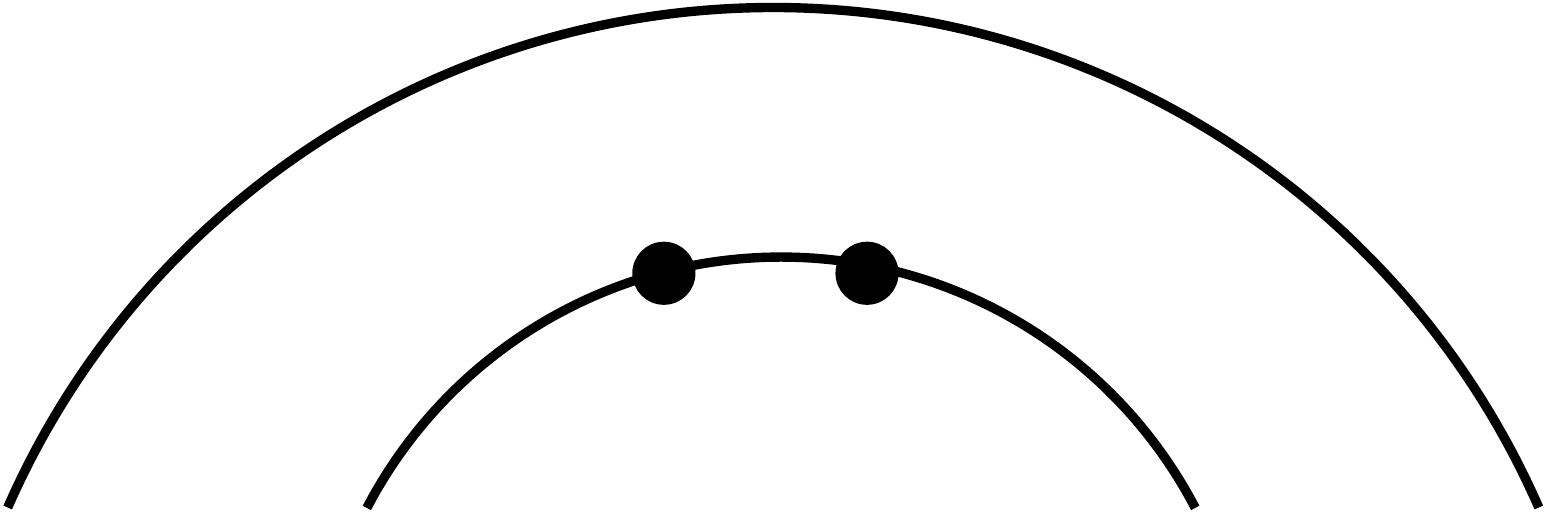}}
\end{equation*}

Applying TD, this Type II relation can be simplified to
\begin{equation}\label{type2}
\raisebox{-5pt}{\includegraphics[width=.9in]{unnest3.pdf}} \hspace{.1in} = \raisebox{-10pt}{ \includegraphics[width=.9in]{nest3.pdf}}
\end{equation}

For each pair of adjacent annuli in an annular configuration, there are Type I and Type II relations obtained by generalizing Equations (\ref{type1}) and (\ref{type2}) above. 

The remainder of this section is dedicated to proving the following result

\begin{theorem}\label{reltypes}
Any relation on incompressible, marked surfaces can be written as a linear combination of Type I and Type II relations.
\end{theorem}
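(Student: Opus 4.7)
The plan is to invoke Kaiser's theorem from \cite{UK}, cited at the end of Section~\ref{prelim}: every relation among elements of $\mathcal{F}_{inc}(A\times I, c_{2n})$ arises from applying Bar-Natan relations to two different sequences of compressions of some surface $S \in \mathcal{F}(A\times I, c_{2n})$. So it suffices to show that for each such $S$ and each pair of compression sequences reducing $S$ to a sum of incompressible surfaces, the resulting difference is a $\mathbb{Z}$-linear combination of Type~I and Type~II relations.

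First I would reduce to \emph{elementary} moves between compression sequences. Any two complete compression sequences of $S$ can be connected by a finite chain in which consecutive sequences differ either by an isotopy of a single compressing disk or by swapping two successive compressions. If the two compressing disks being swapped are disjoint (up to isotopy), then compressing in either order yields the same marked surface with the same dot placements from NC, so no relation is produced. Hence it suffices to handle an elementary swap involving two non-isotopic compressing disks $D_1, D_2$ whose boundaries cannot be made disjoint.

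Next comes the local-model step. I would argue, using standard innermost-curve surgeries on $D_1 \cap D_2$ and the fact that every compressing disk on a surface in $A\times I$ bounding $c_{2n}$ separates off a tube between two parallel annular pieces, that after isotopy the union $D_1 \cup D_2$ lies inside a 3-ball $B \subset A\times I$ meeting $S$ exactly in the standard tubed unnested configuration from the $n=2$ picture in Figure~\ref{disks}. The complement $S \setminus B$ is fixed throughout both compressions, and any dots lying outside $B$ contribute identically to both sequences. Applying NC along $D_1$ and $D_2$ inside $B$ and simplifying closed spheres by S1, S2 then reduces the local computation to exactly the identity established in Equation~\ref{type1} (if $B$ contains no dots) or Equation~\ref{type2} together with TD (if $B$ contains a dot). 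Gluing the fixed outside configuration back in shows the difference of the two compression outcomes is literally a Type~I or Type~II relation applied at the adjacent pair of annuli arising from the neck in $B$.

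The main obstacle will be the three-manifold-topology step that puts $D_1 \cup D_2$ into the standard local model. Compressing disks in $A\times I$ for a surface bounding $c_{2n}$ can look complicated globally — they may wind around the torus or sit between non-adjacent annuli — so I expect to need a careful innermost-disk / outermost-arc argument on $D_1\cap D_2$, combined with the observation that incompressible pieces are all annuli, to force the configuration inside a ball to be the unnested model. I would also need to track dots carefully during this isotopy, using the rule that dots move freely within a connected component, so that the dot distribution seen inside $B$ is well-defined and produces only the Type~I or Type~II local identity. Once this localization is established, summing over the chain of elementary swaps expresses the original relation as a $\mathbb{Z}$-linear combination of Type~I and Type~II relations, completing the proof.
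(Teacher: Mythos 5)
Your first step---invoking Kaiser's theorem to reduce to comparing two compression sequences $D$ and $D'$ of a single surface $S$---matches the paper. After that your route diverges, and it contains two genuine gaps. The first is the claim that any two complete compression sequences of $S$ are connected by a finite chain of elementary moves (isotopies of a single disk and swaps of two successive compressions). This is an unproven connectivity statement about the set of maximal compression systems of $S$; it is not supplied by Kaiser's result, and it is arguably at least as hard as the theorem you are trying to prove. Note that even the basic $n=2$ relation does not fit your taxonomy: there each sequence consists of a single disk, so nothing is being ``swapped''---one disk is replaced by a non-isotopic one---and in general the two terminal configurations $S_k$ and $S_l'$ can be far apart in the order on matchings, so no single local move connects them. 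The second gap is the local-model step: you assert that two intersecting compressing disks can be isotoped into a ball meeting $S$ in the tubed unnested configuration of Figure \ref{disks}, but this fails for non-separating compressions (which the paper must handle when $k=n$ and when closed torus components arise); a non-separating compression reduces genus without splitting off a second annulus and cannot produce that model. You flag the localization as the main obstacle but do not resolve it, and the non-separating and closed-component cases are not addressed at all.

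The paper avoids both issues with a normal-form argument rather than a move-by-move comparison. It first disposes of closed components and shows $k=l$ by Euler characteristic, then observes that applying NC to all $k$ disks of a sequence produces a completely explicit linear combination: all ways of distributing $k$ (or $k+1$, if $S$ is dotted) dots over the $n$ annuli of the terminal configuration. Counting dots against $k\geq n-1$ kills most cases via TD, and Lemmas \ref{alldot} and \ref{sum} show that the two surviving outputs---the all-dotted configuration, and the sum of all placements of $n-1$ dots---reduce via Type I and Type II relations to a canonical expression independent of the configuration. Since both sides of any relation reduce to the same normal form, their difference is a combination of Type I and Type II relations. To salvage your approach you would need to prove the connectivity and localization statements, which would amount to substantially more three-manifold topology than the counting argument the paper actually uses.
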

Let $S$ be a marked, connected surface in $\mathcal{F}(A\times I, c_{2n})$.  Let $D=(D_1, D_2, \ldots, D_k)$ and $D'=(D_1', \ldots, D_l')$ be two families of compressing disks for $S$ that  produce a relation between elements of $\mathcal{F}_{inc}(A\times I, c_{2n})$.  In other words, both sequences of compressions $D$ and $D'$ terminate at incompressible surfaces with boundary $c_{2n}$ and thus are configurations of $n$ annuli. Let $S=S_0$; let $S_1$ be the surface obtained by compressing $S_0$ along $D_1$.  In general, we say $D_i$ is a compressing disk for the intermediate surface $S_{i-1}$. Let the intermediate surfaces for the other family of compressions be denoted $S_i'$. 

By sequentially applying NC to each disk in $D$ and reducing by the other Bar-Natan relations, a linear combination of elements in $\mathcal{F}_{inc}(A\times I, c_{2n})$ is produced. In particular, this linear combination will consist of different markings of the annular configuration $S_k$. Performing the same process for the family $D'$ produces a second linear combination of markings of the annular configuration $S_l'$. Because $D$ and $D'$ are families for the same initial surface, the two expressions are equal. By the result from \cite{UK} stated earlier, every relation among marked, incompressible surfaces can be obtained from two such families of compressions.

\begin{lemma}
All relations can be generated by families $D$ and $D'$ whose prescribed compressions never create closed components.
\end{lemma}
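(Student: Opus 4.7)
The plan is to induct on a complexity measure for the closed components appearing along the sequence $D$ (and symmetrically $D'$), for instance the sum, over compressions $D_i$ that create a closed component $C_i$, of the genus of $C_i$. The base case (complexity zero) is precisely the desired conclusion, so one only needs to show that any family with positive complexity can be replaced by one of strictly smaller complexity producing the same element of $\mathcal{BN}(A\times I, c_{2n}, \mathbb{Z})$.

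Let $D_i$ be the first compression that creates a closed component, written $C = P^- \cup D_i$, where $P^- \subset S_{i-1}$ is the subsurface becoming closed. An Euler characteristic computation using the fact that $D_i$ is an essential compressing disk forces $C$ to have genus at least one, so spheres never actually arise this way. Since every closed orientable surface in $A\times I$ is compressible, $C$ admits a compressing disk $E$ in the solid torus. First I would isotope $\partial E$ off the subdisk $D_i \subset C$ so that $\partial E$ lies essentially in $P^-$, then apply an innermost-disk argument to the intersection $E \cap (S_{i-1}\setminus P^-)$ to obtain a disk $\tilde E$ meeting $S_{i-1}$ only along $\partial \tilde E \subset P^-$. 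This $\tilde E$ is then a compressing disk for $S_{i-1}$ itself.

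Next I would replace $D_i$ in the sequence by $\tilde E$ followed by a modified $D_i'$, adjusting the later disks $D_{i+1}, \ldots, D_k$ by isotopy to preserve compatibility. Performing $\tilde E$ first strictly simplifies $P^-$, so the subsequent compression along $D_i'$ either creates no closed component at all, or creates a closed component of strictly smaller genus than $C$, decreasing the complexity. That the new sequence produces the same linear combination of incompressible surfaces as the original follows from the standard commutativity of NC expansions at disjoint disks, a direct consequence of the Frobenius algebra structure on $\mathcal{A}$, together with the observation that every S1 or S2 reduction applied to a descendant of $C$ in the original sequence is mirrored by an NC expansion at $\tilde E$ in the new one. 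Iterating reduces the complexity to zero. Applying the same procedure to $D'$ yields the desired $\tilde D'$.

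I expect the main obstacle to be the technical step of producing $\tilde E$: the innermost-disk argument must eliminate all circles of $E \cap (S_{i-1}\setminus P^-)$ without introducing new closed components into any intermediate surface, without losing essentiality of $\partial \tilde E$, and without forcing any of the later disks $D_{i+1}, \ldots, D_k$ to cease being compressing disks of their new intermediate surfaces. Coordinating these isotopies across the entire remaining sequence, and verifying term-by-term that the Bar-Natan class is preserved, is the genuinely delicate point.
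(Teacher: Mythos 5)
Your approach---inductively reducing the genus of the closed components by inserting auxiliary compressions---is genuinely different from the paper's, which never modifies the topology of the problem at all: it simply \emph{evaluates} the closed component $S_{i+1}^2$ using the Bar-Natan relations. A sphere contributes nothing by S1 and S2, so $D_{i+1}$ may be deleted; a torus evaluates to $2$ and a dotted torus to $0$, so $D_{i+1}$ may be exchanged for one explicit non-separating disk; a closed surface of genus at least two evaluates to $0$ whether dotted or not, so the whole relation is trivial. Note that the case you work hardest on (higher genus) is therefore the trivial one, while the case you dismiss (the sphere) is unavoidable: your own induction terminates exactly when the pre-compressions along $\tilde{E}$ have simplified $P^-$ so far that $D_i$ caps off a sphere, i.e.\ when $D_i$ has become an inessential disk that must be discarded---which is precisely the sphere case you claimed never arises.

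The genuine gap is in your second step. The assertion that the modified family yields ``the same linear combination of incompressible surfaces'' is the entire content of the lemma, and it does not follow from commutativity of NC expansions at disjoint disks: you are not permuting the disks of $D$, you are \emph{inserting} a new disk $\tilde{E}$ (indeed, $g$ of them over the course of the induction), which changes the number of compressions $k$ and hence---by the dot count used throughout Section 3, where each application of NC adds one dot to every term---the number of dots appearing in every term of the output. Whether the extra dots and terms introduced by the $\tilde{E}$-compressions exactly reproduce the scalar ($1$, $2$, or $0$) by which the closed component would otherwise have been evaluated is a computation with S1, S2, TD, and NC that your proposal never performs, and it is exactly the computation that constitutes the paper's proof. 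Until that identity is verified in $\mathbb{Z}\mathcal{F}(A\times I, c_{2n})$ after reduction (not merely in $\mathcal{BN}(A\times I,c_{2n},\mathbb{Z})$, where both sides are trivially the class of $S$), the induction has no content. The construction of $\tilde{E}$, which you flag as the delicate point, is comparatively minor; the coefficient and dot bookkeeping is where the proof actually lives.
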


\begin{proof}
WLOG, assume that  compression of $S_i$ along $D_{i+1}$  yields a closed component. Write $S_{i+1} = S_{i+1}^1 \sqcup S_{i+1}^2$ where $S_{i+1}^2$ is the closed component. 

We argue that when $S_{i+1}^2$ is not a torus, this compression was superfluous. If $S_{i+1}^2$ is a torus, it  can be replaced with a different, non-separating compression.

Via NC, if $S_i$ carries no dots initially
\begin{equation}\label{closedcomp}
S_{i} = \dot{S}_{i+1}^1 \sqcup S_{i+1}^2 + S_{i+1}^1 \sqcup \dot{S}_{i+1}^2
\end{equation}

\noindent where the dots indicate which components carry dots. If $S_i$ has one dot initially, NC yields

\begin{equation}\label{closedcompdot}
\dot{S_i} = \dot{S}_{i+1}^1 \sqcup \dot{S}_{i+1}^2
\end{equation}

If $S_{i+1}^2$ is a sphere, Eq. \ref{closedcomp} reduces to  $S_{i} =  S_{i+1}^1$ via  $S1$ and $S2$. Eq. \ref{closedcompdot} reduces to $\dot{S_i} = \dot{S}_{i+1}^1$.  In either case, compression along $D_{i+1}$ contributes nothing and was not necessary. 

If $S_{i+1}^2$ is a torus, Eq. \ref{closedcomp} reduces to $S_{i} = \dot{S_{i+1}^1} \sqcup S_{i+1}^2$, via NC and TD. Using NC once again,  the torus evaluates to 2, so Eq. \ref{closedcomp} further reduces to $S_{i} = 2\dot{S_{i+1}^1}$. Replacing $D_{i+1}\in D$ with the disk shown in Figure \ref{tor} will  produce the same relation without creation of closed components. Since a dotted torus evaluates to 0 by applying NC, Eq. \ref{closedcompdot} reduces to $\dot{S_i} = 0$ .

Finally, if $S_{i+1}^2$ has higher genus, both $S_{i+1}^2$ and $\dot{S_{i+1}^2}$ evaluate to 0 by NC and TD. This means $S_i = 0$, and $D$ and $D'$ do not yield a relation. 
\end{proof}
\begin{figure} [h]
\center{\includegraphics[height=.75in]{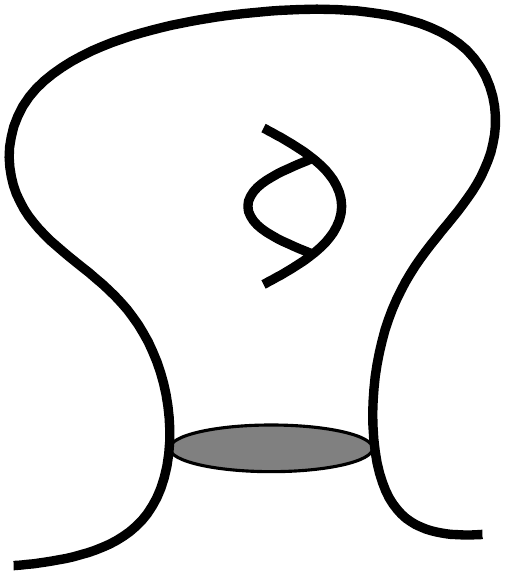} \hspace{.75in} \includegraphics[height=.75in]{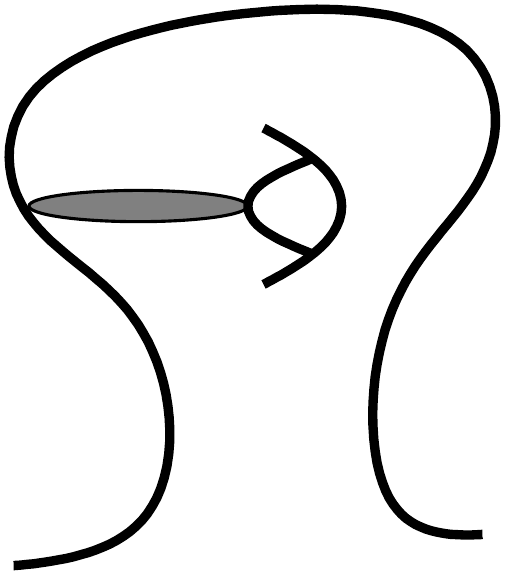}}
\caption{An alternative compressing disk}
\label{tor}
\end{figure}
Since compressions involving closed components have been shown to be unnecessary, assume $D$ and $D'$ contain no such compressing disks.

\begin{lemma}\label{numcomp}
Given the two families of compressing disks, $D = (D_1, D_2, \ldots, D_k)$ and $D'= (D_1', \ldots, D_l')$, $k=l$. In other words, the two families have the same number of disks.
\end{lemma}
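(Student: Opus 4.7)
The plan is to use Euler characteristic as a compression counter. A compression operation on a surface $S$ along a disk $D$ with $\partial D \subset S$ replaces an open annular neighborhood of $\partial D$ (Euler characteristic $0$) by two parallel copies of $D$ (total Euler characteristic $2$), so $\chi$ strictly increases by $2$ regardless of whether $D$ is separating or non-separating. Thus the number of disks in any compression family is completely determined by the difference $\chi(\text{terminal}) - \chi(\text{initial})$ divided by $2$.

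First I would record this local Euler characteristic computation as the key lemma underlying the argument. Next I would fix the initial surface: since $S$ is connected with boundary $c_{2n}$, its Euler characteristic $\chi(S)$ is a well-defined integer $\chi_0$. Then I would identify the terminal Euler characteristic of both families: each terminates in an annular configuration, that is, a disjoint union of $n$ annuli with total Euler characteristic $0$. The previous lemma guarantees we may assume no compression in $D$ or $D'$ introduces closed components, so every intermediate surface has only boundary components, but in fact the Euler characteristic accounting would work irrespective of this because each compression step contributes $+2$ to $\chi$ no matter the component type produced.

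Putting these steps together, applying the $k$ compressions of $D$ gives
\begin{equation*}
0 = \chi(S_k) = \chi(S_0) + 2k = \chi_0 + 2k,
\end{equation*}
so $k = -\chi_0/2$. Applying the same count to the family $D'$ yields $l = -\chi_0/2$, hence $k = l$, which is the desired conclusion.

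I do not foresee a serious obstacle here; the only subtlety is confirming that the local compression formula $\chi \mapsto \chi + 2$ holds in the generality we need, which reduces to the standard surgery description of disk compressions. The earlier lemma eliminating closed-component-producing disks is not strictly necessary for this Euler characteristic bookkeeping, but it keeps the terminal surface genuinely a disjoint union of $n$ annuli (rather than annuli together with tori or spheres that would later be killed by the skein relations), which is what makes the computation of $\chi(\text{terminal}) = 0$ unambiguous.
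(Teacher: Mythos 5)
Your proof is correct and follows essentially the same route as the paper: the paper's argument is exactly the Euler characteristic count $\chi(S)+2k=\chi(S_k)=0=\chi(S_l')=\chi(S)+2l$, with each compression contributing $+2$ to $\chi$. Your added justification of the local formula (replacing an annular neighborhood of $\partial D$ by two disks) and the remark that the closed-component lemma is not strictly needed for this bookkeeping are fine supplements but do not change the argument.
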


\begin{proof}
The terminal surfaces $S_k$ and $S_l'$ are annular configurations, so $\chi (S_k) = \chi (S_l') = 0$. Since each compression increases Euler characteristic by 2, $\chi (S) + 2k = \chi (S_k)$, and $\chi(S) + 2l = \chi (S_l')$. Therefore, $\chi (S) + 2k =\chi(S) + 2l$, and $k=l$. 
\end{proof}

By definition, non-separating compressions do not change the number of connected components of a surface. Separating compressions increase the number of connected components by one. Since $S_k$ and $S_l'$ are comprised of $n$ annuli and the initial surface $S$ is connected, the families $D$ and $D'$ must consist of at least $n-1$ separating compressions. Thus, $k\geq n-1$. 

Every time a compression is performed and NC is applied, one additional dot is added to some component of each surface in the sum being generated. Application of NC to all disks in the family $D$ produces a sum of copies of the annular configuration $S_k$ where each term is marked with the same number of dots distributed across the components of $S_k$.  If the initial surface $S$ is dotted, each term in the final relation will have $k+1$ dots. If $S$ is undotted, each term will have $k$ dots. 

Since $k \geq n-1$, a dotted initial surface $S$ results in a final linear combination where each term will have at least $n$ dots . If $k>n-1$, there will be more than $n$ dots on each surface in the relation, so each term in the relation will have some component with two or more dots. By TD, such a  relation has no nonzero terms. If $k=n-1$, the resulting relation is, on each side, the sum of all possible ways to arrange $n-1$ dots on the $n$ annuli of some configuration, one of which already has a dot. There is only one non-zero way to accomplish this, so the resulting relation has one non-zero term on each side, the configuration with exactly one dot on each component.

\begin{lemma} \label{alldot}
All configurations of $n$ annuli with each annulus dotted are equivalent via Type II relations.
\end{lemma}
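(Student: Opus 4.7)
The plan is to show that every fully-dotted configuration $\dot{a}$ reduces, via Type~II relations, to a single canonical fully-dotted configuration $\dot{U}$, where $U = \{(1,2),(3,4),\dots,(2n-1,2n)\}$ is the all-unnested matching. Once this is established for every $a \in B$, transitivity yields the lemma. I will induct on $N(a)$, the number of pairs of arcs of $a$ that are nested inside one another; the base case $N(a) = 0$ forces $a = U$ and is trivial.

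For the inductive step with $N(a) > 0$, the first thing I would do is locate an innermost nested pair: arcs $(I,L),(J,K) \in a$ with $I < J < K < L$ such that $(J,K)$ is immediately nested inside $(I,L)$, meaning no third arc of $a$ is sandwiched between them in the nesting order. Such a pair exists by descending through any chain of nestings. In the annular configuration, the two corresponding annuli are adjacent: with all sibling arcs of $(J,K)$ inside $(I,L)$ removed, the cross-sectional region between the arcs $(I,L)$ and $(J,K)$ remains connected, so an arc in the complement of the other annuli can be drawn. I would then apply Type~II to this adjacent pair with the remaining $n-2$ annuli acting as a fully-dotted background. The generalized form of~\eqref{type2} equates $\dot{a}$ with $\dot{a'}$, where $a'$ is obtained from $a$ by replacing the nested pair $(I,L),(J,K)$ by the unnested pair $(I,J),(K,L)$. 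The swap strictly decreases $N$, since the pair $(I,L) \supset (J,K)$ is eliminated and any arc previously trapped between $J$ and $K$ (which had been nested inside both $(I,L)$ and $(J,K)$) becomes unnested with both new arcs. By induction, $\dot{a'}$ is equivalent to $\dot{U}$ modulo Type~II relations, and hence so is $\dot{a}$.

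The hardest part will be justifying that Type~II applies cleanly in this enlarged, fully-dotted setting. To verify this, I would rerun the derivation of~\eqref{type2} in the configuration at hand: tube $(I,L)$ and $(J,K)$ via a standard arc in the complement of the other annuli, place the initial dot on the tubed surface, and apply NC to the two non-isotopic compressing disks, one recovering the nested configuration and one producing the unnested configuration. Each of the four resulting terms carries two dots on the pair (one pre-existing, one introduced by NC); TD annihilates the two terms having both dots on a single annulus of the pair, and the remaining two terms are the fully-dotted nested and fully-dotted unnested configurations, with all surrounding dots preserved. The main obstacle is ensuring that the innermost choice of nested pair makes the standard tube and both compressing disks local to the pair, so that the two-annulus derivation of~\eqref{type2} genuinely produces the claimed relation $\dot{a} = \dot{a'}$ without being disrupted by siblings of $(J,K)$.
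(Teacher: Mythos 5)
Your proof is correct, and it targets the same canonical form as the paper --- the all-dotted outermost configuration $(1,2),(3,4),\ldots,(2n-1,2n)$ --- using the same basic move, namely a Type II relation applied to an adjacent nested pair of annuli. The difference is the induction scheme: the paper inducts on $n$, splitting into the case $(1,2n)\notin a$ (decompose $a$ into its top-level blocks and flatten each by the inductive hypothesis) and the case $(1,2n)\in a$ (apply one Type II relation to the pair $(1,2n),(2,t)$ to reduce to the first case), whereas you induct on the number of nested pairs of arcs and perform one local un-nesting move per step. Your scheme buys a uniform argument with no case analysis, at the cost of having to verify that the complexity measure strictly decreases and that an immediately nested pair of arcs corresponds to adjacent annuli; both checks go through as you indicate (the pair itself contributes $-1$, each arc strictly between $J$ and $K$ loses two nestings, siblings and enclosing arcs are unaffected, and the sibling arcs of $(J,K)$ meet the region between the two annuli only along the base line, so they cannot separate them). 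The paper's version avoids introducing a complexity measure but implicitly relies on the same adjacency fact for the pair $(1,2n),(2,t)$, so neither argument is doing substantially more work than the other.
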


\begin{proof} 
Given $n\in \mathbb{N}$, we call the configuration with associated matching $(1,2), (3,4), \ldots , (2n-1, 2n)$ the outermost configuration in $B^n$. We will show that every all-dotted annular configuration of $n$ annuli is equivalent to the all-dotted, outermost configuration via Type II relations.

\textbf{Base Case}: Let  $n=2$. The all-dotted, nested configuration in $B^2$ is equivalent to the all-dotted outermost configuration via the basic Type II relation.

\textbf{Inductive Argument}: Assume that for $m<n$, all configurations of $m$ annuli with each annulus dotted are equivalent to the all-dotted, outermost configuration via Type II relations.  Let $a\in B^n$, and consider the all-dotted configuration corresponding to $a$.

If $a$ does not include the arc $(1,2n)$, then it has some collection of arcs: $(1, i_1), (i_1+1, i_2) , \ldots, (i_{t}+1, 2n)$ with $i_1\neq 2n$.

\begin{figure}[htp]
\begin{picture}(450,10)(-150,0)
\includegraphics[width=2in]{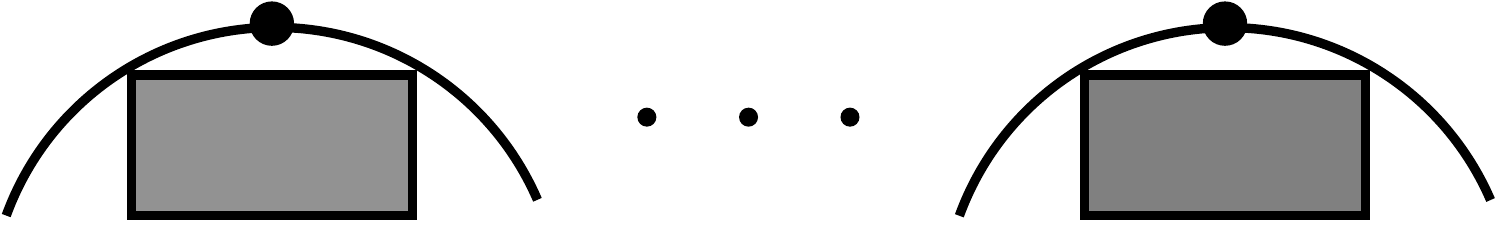}
\put(-145,-10){1}
\put(-95, -10){$i_1$}
\put(-62,-10){$i_{t}+1$}
\put(-3,-10){$2n$}
\end{picture}
\end{figure}

By the inductive argument, the all-dotted configuration associated to $a$ is equivalent to the all-dotted configuration associated to the matching with $(1,2), (3,4), \ldots (i_{1}-1, i_1)$ and all other arcs identical to $a$. We can repeat this process for each arc $(i_{s}+1, i_{s+1})$ and the arcs below it leaving everything else fixed. Repeating this process $t+1$ times, we arrive at the all-dotted, outermost configuration.

\begin{figure}[htp]
\begin{picture}(430,10)(-140,0)
\includegraphics[width=2in]{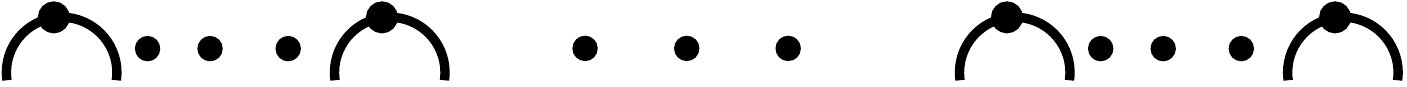}
\put(-147,-10){1}
\put(-100,-10){$i_1$}
\put(-58,-10){$i_{t}+1$}
\put(-3,-10){$2n$}
\end{picture}
\end{figure}

If $a$ includes the arc $(1,2n)$, it also includes an arc $(2, t)$ where $t< 2n$. 

\begin{figure}[htp]
\begin{picture}(450,25)(-150,0)
\includegraphics[width=2in]{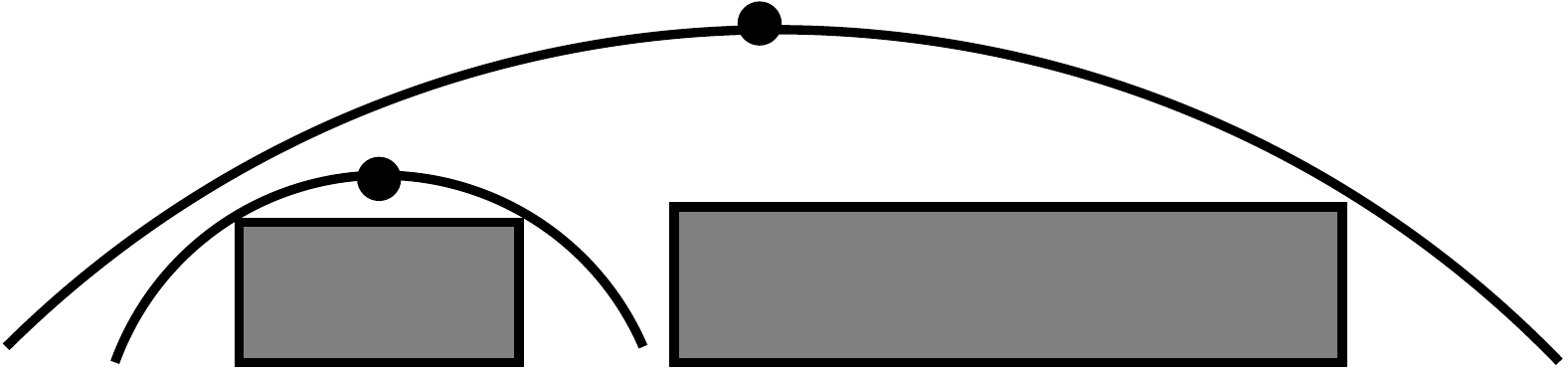}
\put(-150,-10){1}
\put(-137, -10){2}
\put(-86,-10){$t$}
\put(-5,-10){$2n$}
\end{picture}
\end{figure}

By a single Type II relation, the all-dotted configuration associated to $a$ is equivalent to the all-dotted configuration associated to the matching with the arcs $(1,2)$, $(t, 2n)$, and all other arcs identical to $a$. 

\begin{figure}[htp]
\begin{picture}(450,30)(-150,0)
\includegraphics[width=2in]{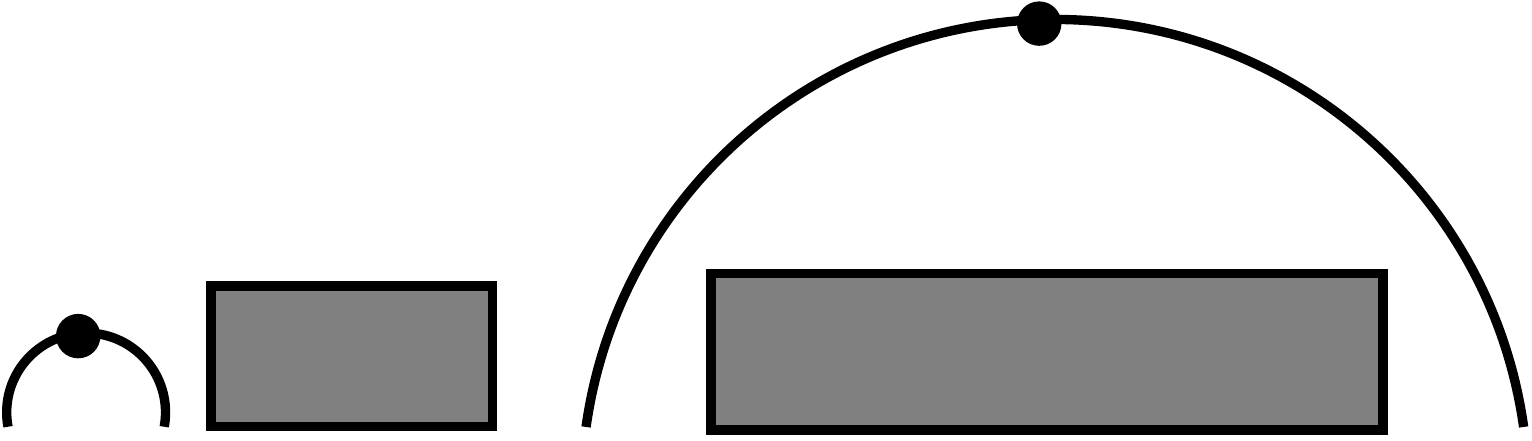}
\put(-147,-10){1}
\put(-132, -10){2}
\put(-92,-10){$t$}
\put(-4,-10){$2n$}
\end{picture}
\end{figure}

Appealing to the previous case, we conclude that this all-dotted configuration is equivalent to the all-dotted, outermost configuration.
\end{proof}
\begin{cor}\label{cor}
Any two configurations of $n$ annuli with the same single undotted annulus are equivalent via Type I and Type II relations.
\end{cor}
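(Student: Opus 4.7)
The plan is to reduce the corollary to Lemma \ref{alldot} by observing that a fixed undotted annulus partitions the remaining $n-1$ (all-dotted) annuli into two independent groups that can be rearranged separately. Concretely, suppose both configurations have their undotted annulus corresponding to the arc $(i,j)$. Then the other arcs of each matching split into two sub-matchings: one on the interior nodes $\{i+1,\ldots,j-1\}$ and one on the exterior nodes $\{1,\ldots,i-1\}\cup\{j+1,\ldots,2n\}$, with every arc on both sides carrying a dot. The crossingless condition on the full matching (together with the presence of $(i,j)$) forces the interior arcs to stay among $\{i+1,\ldots,j-1\}$ and the exterior arcs to stay among the exterior nodes, so each sub-matching is itself a crossingless matching after an order-preserving relabeling.

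First I would apply Lemma \ref{alldot} to the interior sub-matching. Restricted to the inside of $(i,j)$, each configuration is an all-dotted matching on the common set of $j-i-1$ nodes, so Lemma \ref{alldot} supplies a sequence of Type II relations---each involving only a pair of adjacent dotted annuli inside $(i,j)$---transforming one into the other. None of these relations touches the arc $(i,j)$ or any exterior annulus. Second, after the order-preserving relabeling of the exterior nodes, I would apply Lemma \ref{alldot} again to the exterior sub-matching, producing a sequence of Type II relations among dotted annuli outside $(i,j)$ that leaves the undotted annulus and the interior unchanged. Concatenating the two sequences yields the desired equivalence between the original configurations.

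The main point to check is that Lemma \ref{alldot} really does apply to each sub-configuration inside the full ambient setting. Its proof uses Type II relations, each of which requires two adjacent annuli; adjacency requires a connecting arc in the complement that avoids all other annuli, in particular the fixed annulus $(i,j)$. I would verify that adjacency within the interior (resp.\ exterior) sub-configuration is inherited from the full configuration, so that every Type II step supplied by Lemma \ref{alldot} for the sub-matching remains a legitimate Type II relation in the ambient configuration. The exterior case needs slightly more care because an exterior arc can wrap over $(i,j)$, but such an annulus is still dotted and its adjacency with other exterior annuli is unaffected by the presence of the interior of $(i,j)$. Notably, Type I relations are not needed---the phrase ``Type I and Type II'' in the statement of the corollary merely records that these are the only relations available.
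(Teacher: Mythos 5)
Your argument is correct and is the natural derivation the paper intends: the corollary is stated without proof as an immediate consequence of Lemma \ref{alldot}, and your decomposition into the interior and exterior all-dotted sub-matchings determined by the fixed undotted arc $(i,j)$, each reduced to a canonical form by Lemma \ref{alldot}, supplies exactly the missing details. Your checks that adjacency (hence the validity of each Type II move) is inherited by the ambient configuration, and that Type I relations are not actually needed, are also correct.
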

By Lemma \ref{alldot},  every relation that begins with a dotted surface can be written in terms of Type I and Type II relations. 

If the initial surface was undotted and $k>n$, each term in the resulting relation has more than $n$ dots distributed across $n$ components. This forces one or more components to have two dots, which means this relation is comprised of all zero terms. 

The only two remaining cases are when $S$ is undotted, and $k=n$ or $n-1$.

If $k=n$, the families $D$ and $D'$ consist of $n-1$ separating disks and 1 non-separating disk. Applying NC to the non-separating disk produces a factor of 2 in the terms that had no dots on the component where compression occured. It will force all other terms to be zero. Thus, when $k=n$, any system of $n$ compressions leads to twice some annular configuration with all components dotted.

All such configurations are equivalent by Lemma \ref{alldot}, so the case when $k=n$ can be expressed as a linear combination of Type II relations.

If $k=n-1$, then every disk is separating. Each time a compression occurs, one new connected component and one additional dot are produced. Application of NC to all $k$ compressions results in a linear combination of all possible ways to arrange $n-1$ dots on $n$ annuli. The resulting relation will therefore have $n = \binom{n}{n-1}$ terms on each side each appearing with a coefficient of 1.  

Appealing to Corollary \ref{cor}, we can group equivalent configurations with one undotted annulus into classes according to the one undotted arc $(i,j)$ in their associated matchings. An element of this class will be denoted by $\overline{(i,j)}$.

\begin{lemma} \label{altsum}
Given any $n\in \mathbb{N}$, a configuration $\overline{(i,j)}$ can be written, using Type I and Type II relations, as the alternating sum $\overline{(i,j)} = \overline{(i,i+1)} - \overline{(i+1, i+2)} + \cdots - \overline{(j-2, j-1)}+ \overline{(j-1, j)}.$ 
\end{lemma}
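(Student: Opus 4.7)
The plan is to induct on the arc length $\ell = j - i$. The base case $\ell = 1$ is immediate, since the right-hand side of the claimed equality reduces to the single term $\overline{(i, i+1)}$, which equals the left-hand side.

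For the inductive step, assume the lemma holds for all arcs of length less than $\ell$, and fix a matching representing $\overline{(i,j)}$. Because $i$ and $j$ must have opposite parity, $\ell$ is odd, and for $\ell \geq 3$ the $\ell - 1$ positions strictly between $i$ and $j$ are matched among themselves; in particular there is a unique arc of the form $(i+1, m)$ with $i+2 \leq m \leq j-1$ nested immediately inside $(i, j)$. The arcs $(i, j)$ and $(i+1, m)$ are adjacent annuli in the configuration, so the Type I relation of Equation (\ref{type1}) applies to this pair. In $\overline{(i,j)}$ the arc $(i,j)$ is the only undotted arc and $(i+1, m)$ is dotted, so this configuration plays the role of ``nested with dot on the inner annulus'' in that relation. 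Solving Equation (\ref{type1}) for that term and identifying each of the other three configurations with its class via Corollary \ref{cor} gives
\begin{equation*}
\overline{(i,j)} \;=\; \overline{(i, i+1)} + \overline{(m, j)} - \overline{(i+1, m)}.
\end{equation*}

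Now $(i+1, m)$ has length $m - i - 1 < \ell$ and $(m, j)$ has length $j - m < \ell$, so the inductive hypothesis expands each as an alternating sum. Because $(i+1,m)$ and $(m,j)$ are themselves arcs, $m-i$ is even and $j-m$ is odd, and both expansions therefore contain an odd number of terms beginning and ending with a positive sign:
\begin{equation*}
\overline{(i+1, m)} = \overline{(i+1, i+2)} - \overline{(i+2, i+3)} + \cdots + \overline{(m-1, m)},
\end{equation*}
\begin{equation*}
\overline{(m, j)} = \overline{(m, m+1)} - \overline{(m+1, m+2)} + \cdots + \overline{(j-1, j)}.
\end{equation*}
Distributing the minus sign in front of $\overline{(i+1, m)}$ reverses every sign in its expansion, producing the sequence $-\overline{(i+1, i+2)} + \overline{(i+2, i+3)} - \cdots - \overline{(m-1, m)}$, which slots in between the leading $\overline{(i, i+1)}$ and the expansion of $\overline{(m, j)}$ to yield the single alternating sum claimed by the lemma.

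The only delicate point I foresee is the sign bookkeeping at the ``seam'' near index $m$: one must check that the last sign of the (negated) $\overline{(i+1,m)}$-expansion and the first sign of the $\overline{(m,j)}$-expansion fit together so that the overall pattern $(-1)^{k-i}$ on $\overline{(k,k+1)}$ is preserved. This reduces to the parity observation above ($m-i$ even, $j-m$ odd), after which the three pieces concatenate into one alternating sum with no ambiguity.
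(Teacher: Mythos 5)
Your proof is correct. Both your argument and the paper's hinge on the same key move: a single Type I relation applied to the nested adjacent pair consisting of $(i,j)$ and the arc beginning at $i+1$, which trades $\overline{(i,j)}$ for $\overline{(i,i+1)}$ minus the inner class plus the class of the remaining arc, followed by induction. The difference is organizational but real: you induct on the arc length $j-i$ and use whatever arc $(i+1,m)$ the matching happens to contain, so every case is handled uniformly by the decomposition $\overline{(i,j)} = \overline{(i,i+1)} - \overline{(i+1,m)} + \overline{(m,j)}$ together with Corollary \ref{cor}. The paper instead inducts on the number of annuli $n$, splitting into the case $(i,j) \neq (1,2n)$ --- where it invokes the inductive hypothesis on the sub-matching lying under $(i,j)$, implicitly using that Type I and Type II relations are local and can be transplanted into a larger configuration --- and the case $(i,j) = (1,2n)$, where it first normalizes the representative via Corollary \ref{cor} so that the inner arc is $(2,3)$ and then applies the same Type I move. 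Your route avoids both the case split and the normalization of the representative, at the cost of the sign bookkeeping at the seam $m$, which you resolve correctly via the parity observation that $m-i$ is even and $j-m$ is odd.
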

\begin{proof}
\underline{Base Case}: For $n=2$, this is just the basic Type I relation.

\underline{Inductive Argument}: Assume for $m<n$, every configuration $\overline{(i,j)}$ of $m$ annuli can be written $\overline{(i,j)} = \overline{(i,i+1)} - \overline{(i+1, i+2)} + \cdots + \overline{(j-1, j)}$. Let $\overline{(i,j)}$ be a configuration of $n$ annuli. 

If $(i,j) \neq (1,2n)$, then $j-i=2m-1$ where $m<n$. 
\begin{figure}[htp]
\begin{picture}(430,35)(-140,4)
\includegraphics[width=2in]{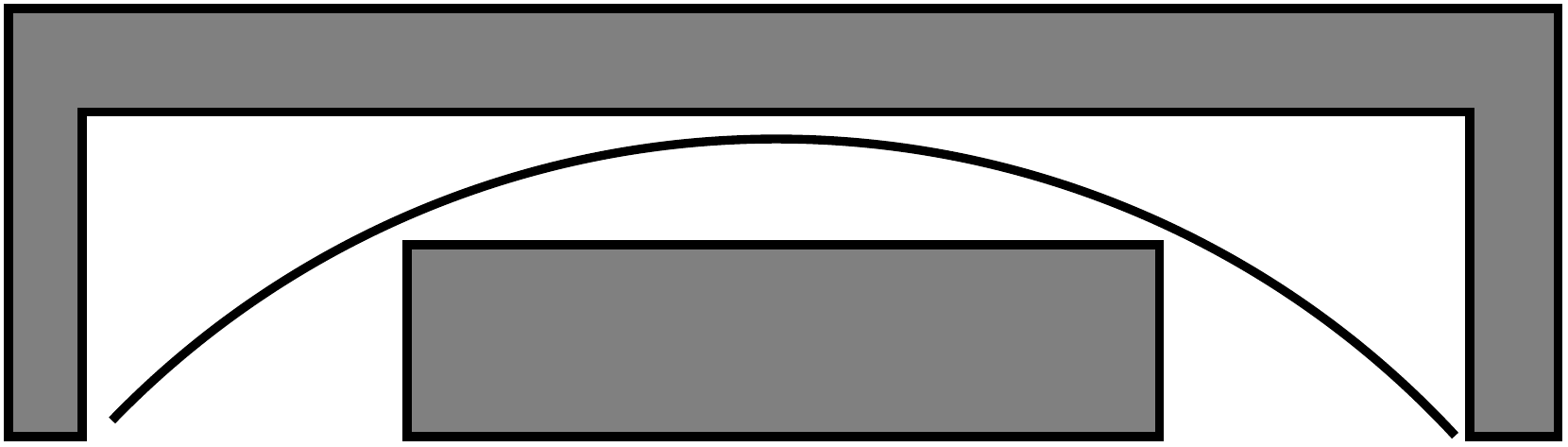}
\put(-135,-10){$i$}
\put(-14,-10){$j$}
\end{picture}
\end{figure}
Since the arc $(i,j)$ and those below it constitute a matching on $2m$ nodes, we may apply the inductive assumption leaving all arcs outside of $(i,j)$ fixed to get the desired equation.

If $(i,j) = (1,2n)$, then we may assume $\overline{(i,j)}=\overline{(1,2n)}$ is the configuration associated to the matching with undotted arc $(1,2n)$ and dotted arcs $(2,3), \ldots, (2n-2, 2n-1)$.

\begin{figure}[htp]
\begin{picture}(430,30)(-140,-1)
\includegraphics[width=2in]{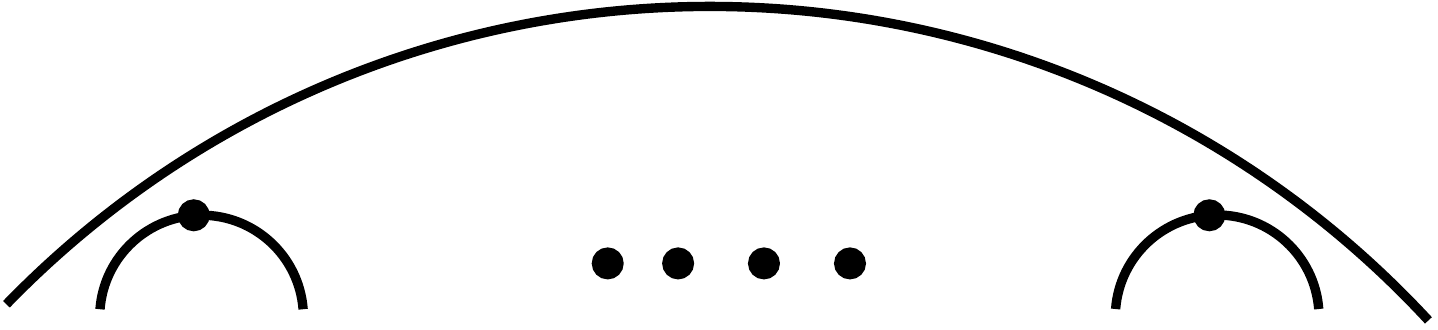}
\put(-145,-10){1}
\put(-8,-10){$2n$}
\end{picture}
\end{figure}

Via a Type I relation, we have $\overline{(1,2n)} = \overline{(1,2)} - \overline{(2,3)} + \overline{(3,2n)} .$ Applying the inductive assumption, we get $$\overline{(1,2n)} = \overline{(1,2)} - \overline{(2,3)} + \Big( \overline{(3,4)} - \overline{(5,6)} + \cdots +\overline{(2n-1, 2n)}\Big) .$$

So, given any $n\in \mathbb{N}$ and any configuration $\overline{(i,j)}$ of $n$ annuli, it can be written as desired using Type I and Type II relations.
\end{proof}

\begin{lemma}\label{sum}
Given any configuration of $n$ annuli, the sum of all possible ways to place $n-1$ dots on that configuration reduces, via Type I and Type II relations, to $\overline{(1,2)} + \overline{(3,4)} + \cdots + \overline{(2n-1, 2n)}$.
\end{lemma}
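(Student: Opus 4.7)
The plan is to first reduce the statement to a purely combinatorial coefficient identity, then verify the coefficients via a parity argument about crossingless matchings. Let $a$ be the matching associated to the given configuration of $n$ annuli. Placing $n-1$ dots on $n$ annuli leaves exactly one annulus undotted, and the $n$ nonzero placements are indexed by the arcs of $a$. By Corollary \ref{cor}, if the undotted annulus corresponds to the arc $(i,j) \in a$, then the resulting marked configuration represents the class $\overline{(i,j)}$ regardless of how the remaining arcs are arranged. Hence the sum in question equals $\sum_{(i,j) \in a} \overline{(i,j)}$. Expanding each term via Lemma \ref{altsum} and collecting, this rearranges as $\sum_{r=1}^{2n-1} c_r \, \overline{(r,r+1)}$, where
$$c_r = \sum_{\substack{(i,j) \in a \\ i \leq r < j}} (-1)^{r-i}.$$
The goal becomes showing $c_r = 1$ when $r$ is odd and $c_r = 0$ when $r$ is even, which would exactly match the outermost configuration $\overline{(1,2)} + \overline{(3,4)} + \cdots + \overline{(2n-1,2n)}$.

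For fixed $r$, let $T_r$ be the set of arcs contributing to $c_r$, i.e., those with $i \leq r < j$. Since $a$ is crossingless, $T_r$ is totally ordered by nesting: $A_1 \supset A_2 \supset \cdots \supset A_{m_r}$ with $A_s = (i_s, j_s)$. I plan to establish two parity facts. First, $m_r \equiv r \pmod{2}$: the arcs of $T_r$ are exactly those with one endpoint in $\{1,\ldots,r\}$ and one in $\{r+1,\ldots,2n\}$, so counting endpoints on the left gives $r = 2N + m_r$ where $N$ is the number of arcs of $a$ lying entirely in $\{1,\ldots,r\}$. Second, $i_{s+1} - i_s$ is odd for each $s < m_r$, and $r - i_{m_r}$ is even. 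This follows because, for $A_s,A_{s+1}$ consecutive in $T_r$, no node strictly between $i_s$ and $i_{s+1}$ can be paired with a node in $(j_{s+1}, j_s)$: such an arc would itself belong to $T_r$ and lie strictly between $A_s$ and $A_{s+1}$, contradicting consecutiveness. Thus the nodes in $\{i_s+1, \ldots, i_{s+1}-1\}$ match among themselves, forcing $i_{s+1} - i_s - 1$ to be even. The same reasoning applied to $A_{m_r}$ shows the nodes in $\{i_{m_r}+1, \ldots, r\}$ match among themselves, so $r - i_{m_r}$ is even.

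Combining, $(-1)^{r - i_s} = (-1)^{m_r - s}$ for each $s$, and hence
$$c_r = \sum_{s=1}^{m_r} (-1)^{m_r - s} = \sum_{t=0}^{m_r - 1} (-1)^t,$$
which equals $1$ if $m_r$ is odd and $0$ if $m_r$ is even. Together with $m_r \equiv r \pmod{2}$, this gives precisely the desired coefficients and completes the proof. The main obstacle is the second parity fact: it rests on the geometric observation that the crossingless condition, combined with the requirement that $A_s$ and $A_{s+1}$ be \emph{consecutive} in the nested chain of $T_r$, prevents any ``shortcut'' arcs between two successive members of the chain. Once this observation is pinned down, everything else is bookkeeping, and an alternative inductive proof on $n$ runs into the nuisance that an arbitrary matching need not contain an outermost arc of the form $(2i-1, 2i)$, so the direct coefficient computation is cleaner.
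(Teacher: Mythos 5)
Your proof is correct, but it takes a genuinely different route from the paper's. The paper proves Lemma \ref{sum} by induction on $n$, splitting into the cases $(1,2n)\notin a$ (decompose the matching into the blocks under its outer arcs and apply the inductive hypothesis blockwise) and $(1,2n)\in a$ (apply the hypothesis to the arcs nested below $(1,2n)$, then expand $\overline{(1,2n)}$ by Lemma \ref{altsum} and let the alternating signs cancel the inner terms) — so the "nuisance" you anticipate about arbitrary matchings lacking short outermost arcs is in fact handled there by this block/case structure. You instead expand \emph{every} class $\overline{(i,j)}$ via Lemma \ref{altsum} into the basis of nearest-neighbor classes $\overline{(r,r+1)}$ and verify the resulting coefficient identity $c_r=\sum_{(i,j)\in a,\, i\le r<j}(-1)^{r-i}$ equals $1$ for $r$ odd and $0$ for $r$ even, using two parity facts about the nested chain of arcs crossing the gap between nodes $r$ and $r+1$. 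Both parity facts are sound: $r=2N+m_r$ gives $m_r\equiv r \pmod 2$, and the consecutiveness argument correctly forces the nodes strictly between $i_s$ and $i_{s+1}$ (and between $i_{m_r}$ and $r$) to be matched among themselves — though you should state explicitly that partners outside $(i_s,j_s)$ or inside $(i_{s+1},j_{s+1})$ are already excluded by crossinglessness, since your write-up only spells out the exclusion of partners in $(j_{s+1},j_s)$. What your approach buys is a non-inductive, closed-form explanation of why the answer is exactly the "odd gaps" sum: the coefficient of $\overline{(r,r+1)}$ is governed by the parity of the number of arcs of $a$ spanning the gap at $r$, a pleasant invariant that the paper's proof never makes visible. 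What the paper's induction buys is uniformity with the proofs of Lemmas \ref{alldot} and \ref{altsum} (all structural inductions on matchings) and purely local manipulations, at the cost of a case analysis and less transparency about where the final formula comes from.
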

\begin{proof}
Let $a\in B^n$ be the matching associated to some annular configuration. The only nonzero way to place $n-1$ dots on $n$ annuli is to place 1 dot on all but one annulus. The sum of all possible ways to do this can be written $$\sum_{(e_1, e_2)\in a} \overline{(e_1, e_2)}$$ where the sum is over all arcs $(e_1, e_2)$ in the matching $a$.

\underline{Base Case}: If n=2, there are two configurations, and each configuration has two markings with one dot. The unnested configuration has $\overline{(1,2)}$ and $\overline{(3,4)}$, while the nested configuration has $\overline{(1,4)}$ and $\overline{(2,3)}$. For the unnested configuration, the sum $\overline{(1,2)} + \overline{(3,4)}$ is already in the appropriate form.

 Applying Lemma \ref{altsum},  $\overline{(1,4)} = \overline{(1,2)} - \overline{(2,3)} + \overline{(3,4)}$. Thus, $$\overline{(1,4)}+\overline{(2,3)} = (\overline{(1,2)}-\overline{(2,3)}+\overline{(3,4)}) + \overline{(2,3)} = \overline{(1,2)}+ \overline{(3,4)},$$ and the base case is proven.
 
 \underline{Inductive Argument}: Assume for $m<n$ and any configuration of $m$ annuli, this lemma is true. Take some configuration of $n$ annuli and its associated matching $a$. If $(1, 2n)\notin a$, then $a$ has some collection of arcs: $(1, i_1), (i_1+1, i_2) , \ldots, (i_t+1, 2n)$. We have the following,
 \begin{equation}\label{fatty}
 \sum_{(e_1,e_2)\in a} \overline{(e_1,e_2)} = \sum_{\begin{subarray}{c}
 (e_1,e_2)\in a \\
 1\leq e_1 < e_2\leq i_1
 \end{subarray}} \overline{(e_1,e_2)} + \cdots + \sum_{\begin{subarray}{c}
 (e_1,e_2)\in a \\
 i_t+1\leq e_1 < e_2\leq 2n
 \end{subarray}}\overline{(e_1,e_2)}.
 \end{equation}
 
 Applying the inductive assumption, Eq. \ref{fatty} becomes
  \begin{equation}
    \sum_{(e_1,e_2)\in a} \overline{(e_1,e_2)} = \Big( \overline{(1, 2)} + \cdots + \overline{(i_1-1, i_1)}\Big)+ \cdots + \Big( \overline{(i_{t}+1, i_{t+1})} + \cdots + \overline{(2m-1, 2m)}\Big) 
    \end{equation}
 
 If the arc $(1,2n)\in a$, then, applying the inductive assumption to the arcs below $(1, 2n)$, we have 
 $$\sum_{(e_1, e_2)\in a} \overline{(e_1,e_2)}  = \overline{(1,2n)} + \Big( \overline{(2,3)} + \overline{(4,5)} + \cdots + \overline{(2n-2, 2n-1)}\big).$$ By Lemma \ref{altsum}, this can be rewritten as
 \begin{eqnarray*} 
\sum_{(e_1,e_2)\in a} \overline{(e_1,e_2)} & = & \big( \overline{(1,2)} - \overline{(2,3)} + \cdots + \overline{(2n-1, 2n)}\big) +  \big( \overline{(2,3)} + \overline{(4,5)} + \cdots + \overline{(2n-2, 2n-1)}\big)\\
& =& \overline{(1,2)}+\overline{(3,4)}+\cdots + \overline{(2n-1, 2n)}.
\end{eqnarray*}
So, the lemma is true for all configurations of any number of annuli.
\end{proof}

By the Lemma \ref{sum}, any relation coming from an initially undotted surface where the number of compressions is $k=n-1$ are expressible in terms of Type I and Type II relations. Having considered all cases, we conclude that all relations on connected surfaces coming from families $D$ and $D'$ can be written in terms of Type I and Type II relations. 

Assume that any relation coming from a surface with less than $j$ components can be written as a linear combination of Type I and Type II relations. Consider a surface $S\in \mathcal{F}(A\times I, c_{2n})$ with $j$ connected components. Write $n=\sum_{i=1}^j n_i$ where $c_{2n_i}$ is the boundary of the $i^{th}$ component of $S$. Let $D=(D_1. \ldots, D_k)$ and $D'=(D_1', \ldots, D_l')$ be two families of compressions for $S$ that give a relation on incompressible, marked surfaces. 

After performing all compressions prescribed by $D$ and $D'$, apply the connected case to rewrite the contribution of the $j^{th}$ connected component in terms of Type I and Type II relations. Then, apply the inductive assumption to rewrite the contributions from the remaining $j-1$ components in terms of Type I and Type II relations. The result is an expression written entirely in terms of Type I and Type II relations. Thus, Theorem \ref{reltypes} is proven.
    
\section{The space $\widetilde{S}$}\label{space}

\subsection{Construction}\label{const}
Let $a\in B^n$, and let $S^2$ denote the standard two-sphere.  Let $(S^2)^{2n}$ be the cartesian product of $2n$ copies of $S^2$. Define $S_a$ as follows, $$S_a=\{ (x_1, \ldots, x_{2n})\in (S^2)^{2n} : x_i=x_j \text{ if } (i,j)\in a\}.$$ Notice $S_a$ is diffeomorphic to $(S^2)^{n}$.  

 Let $S_{<a} = \bigcup_{b<a} S_b$ and $S_{\leq a} = \bigcup_{b\leq a} S_b$. Finally, let $\widetilde{S} = \cup_{a\in B} S_a$.  

The standard cell decomposition of $S_a$, which we will discuss further in the next section, has only even dimensional cells, so all boundary maps in the chain complex for homology of $S_a$ are zero. Thus, $H_*(S_a)=C_*(S_a)$ and $H^*(S_a) = Hom(C_*(S_a), \mathbb{Z}) \cong C_*(S_a)$. Furthermore, the homology groups are free abelian with bases given by the cells. In \cite{K1}, another even-dimensional cell-decomposition for $S_a$ is given. This decomposition has the property that both $S_a\cap S_b$ for $b<a$ and $S_{<a}\cap S_a$ are subcomplexes.  The homologies and cohomologies of these spaces are, therefore, also finitely generated free abelian. 

The cohomology of $\widetilde{S}$ can be calculated by recursively applying the Mayer-Vietoris sequence. At each iteration, let $X=S_{\leq a}$, $A= S_a$, $B=S_{<a}$, and $Y=S_{<a}\cap S_a$. So, $X=A\cup B$, and $Y=A\cap B$. Since $H^{2k+1}$ is zero for $A, B, Y$, and $X$ for all $a\in B$ for all $k\in \mathbb{Z}$, the Mayer-Vietoris sequence reduces to  a collection of short exact sequences of the form: 
$$0\rightarrow H^{2k}(S_{\leq a}) \rightarrow H^{2k}(S_{a}) \oplus H^{2k}(S_{<a}) \rightarrow H^{2k}(S_{<a}\cap S_a) \rightarrow 0.$$
These sequences are split short exact since $H^{2k}(S_{<a}\cap S_a)$ is finitely generated free abelian and hence a projective $\mathbb{Z}$-module. Because  $H^{2k}(S_{\leq a})$ is isomorphic to a submodule of $H^{2k}(S_{a}) \oplus H^{2k}(S_{<a})$, it is also free abelian. Furthermore, $$rk(H^{2k}(S\leq a)) + rk(H^{2k}(S_{<a}\cap S_a)) = rk(H^{2k}(S_a)) + rk(H^{2k}(S_{<a})).$$ At the final application of Mayer-Vietoris, $S_{\leq a} = \tilde{S}$, so $H^*(\tilde{S})$ is finitely generated free-abelian.

\subsection{Understanding cell decomposition and inclusion}\label{inc}

The standard CW-decomposition of the two sphere is given by some point $\{p \} \in S^2$ and the two-cell $c = S^2 - \{ p\}$. Therefore, the total homology of $S^2$ is $H_*(S^2) = H_0(S^2) \oplus H_2(S^2)$ where $H_0(S^2) = \mathbb{Z}\{ p\}$ and $H_2(S^2) = \mathbb{Z}c$.  This standard structure can be used to endow $S_1^2\times \cdots \times S_n^2$ with the cartesian product cell decomposition and basis for homology. 

Since there is no torsion in the homology of $S^2$, the K\"{u}nneth Formula gives us $$H_n(S_1^2 \times S_2^2) = \bigoplus_{i=0} ^{n} H_i(S_1^2) \otimes H_{n-i}(S_2^2).$$ Carrying out this calculation, we see 
\begin{enumerate}
\item[] $H_*(S_1^2 \times S_2^2) = H_0(S_1^2 \times S_2^2)\oplus H_2(S_1^2 \times S_2^2) \oplus H_4(S_1^2 \times S_2^2)$ where 
\begin{enumerate}
 \item[] $ H_0(S_1^2 \times S_2^2) = H_0(S_1^2)\otimes H_0(S_2^2) \cong \mathbb{Z}\{ p \} \times \{ p \}$ 
 \item[] $H_2(S_1^2 \times S_2^2) = (H_0(S_1^2)\otimes H_2(S_2^2)) \oplus (H_2(S_1^2)\otimes H_0(S_2^2)) \cong \mathbb{Z}\{ p \} \times c \oplus \mathbb{Z}c \times \{ p \}$
 \item[] $H_4(S_1^2 \times S_2^2) = H_2(S_1^2) \otimes H_2(S_2^2) \cong \mathbb{Z}c\times c$
 \end{enumerate}
 \end{enumerate}
Inductively, we get the result that 
\begin{enumerate}
\item{For $0\leq k \leq n$, $H_{2k}(S_1^2 \times \cdots \times S_n^2)$ is the free abelian group with basis given by all possible ways to choose $c$ in exactly $k$ positions and $\{ p \}$ in all others.}
\item{ $H_{k}(S_1^2 \times \cdots \times S_n^2) = 0$ otherwise.}
\end{enumerate}

Given $a\in B^n$, $S_a$ is diffeomorphic to the cartesian product of $n$ two-spheres, so $H_*(S_a)$ has the structure described above. Recall, that there is one copy of $S^2$ for each arc in the matching. We index the spheres with respect to their corresponding arcs. 

For $a\rightarrow b$, $S_a\cap S_b$ is homeomorphic to the cartesian product of $2n-1$ two-spheres.  To see this, recall the matchings $a$ and $b$ differ by exactly two arcs: $(i,j), (k,l) \in a$ and $(i,l), (j,k) \in b$ where $i<j<k<l$. 

The spheres corresponding to arcs identical in $S_a$ and $S_b$ are all present in $S_a\cap S_b$, and the spheres corresponding to the arcs which are different intersect on their diagonals. In other words, if $S_a = S_{(u_1, v_1)}^2 \times \cdots S_{(u_{n-2}, v_{n-2})}^2 \times S_{(i,j)}^2\times S_{(k,l)}^2$ and $S_b = S_{(u_1, v_1)}^2 \times \cdots S_{(u_{n-2}, v_{n-2})}^2 \times S_{(i,l)}^2\times S_{(j,k)}^2$, then $S_a\cap S_b = S_{(u_1, v_1)}^2 \times \cdots S_{(u_{n-2}, v_{n-2})}^2 \times \{(x,x): x\in S^2 \}$. We call this diagonal sphere $S_{\Delta}^2$. 

For example, given the matching $a_1 =$ \includegraphics[width=.7in]{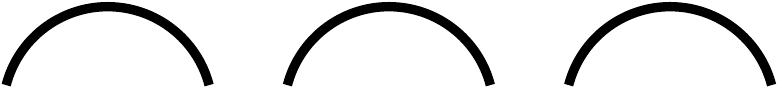}, we will write $S_{a_1} = S_{(1,2)}^2\times  S_{(3,4)}^2 \times S_{(5,6)}^2$, and for the matching $a_2=$ \raisebox{-3pt}{\includegraphics[width=.7in]{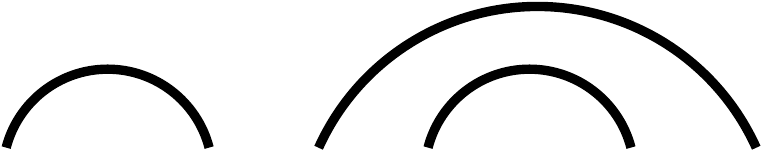}}, we will write $S_{a_2} = S_{(1,2)}^2\times S_{(3,6)}^2\times S_{(4,5)}^2$.   So, $S_{a_1}\cap S_{a_2} = S_{(1,2)}^2\times S_{\Delta}^2$.

The inclusion maps $\psi_{a,b}: S_a\cap S_b\rightarrow S_a$ and $\psi_{b,a}: S_a\cap S_b \rightarrow S_b$ send $$(x_1, \ldots, x_{n-2}, x_\Delta)  \mapsto (x_1, \ldots, x_{n-2}, x_\Delta, x_\Delta)$$ in both $S_a$ and $S_b$.

Let $Q, T$ be basis elements for $H_{2k}(S_{(u_1, v_1)}^2 \times \cdots S_{(u_{n-2}, v_{n-2})}^2)$ and $H_{2k-2}(S_{(u_1, v_1)}^2 \times \cdots S_{(u_{n-2}, v_{n-2})}^2)$ respectively. Every basis element in $H_{2k}(S_a\cap S_b)$ has either the form $Q \times \{ p \}$ or $T \times  c$, and the inclusion maps operate on these elements as follows,
$$Q \times  \{ p \} \mapsto Q \times  \{ p \} \times \{ p \}$$
$$ T \times  c  \mapsto  T \times \{ p \} \times c+T \times c \times \{ p \}.$$

\section{Relating $\mathcal{BN}(A\times I, c_{2n}, \mathbb{Z})$ to $H_*(\widetilde{S})$}\label{isom}

\subsection{The short exact sequence}

In \cite{K2}, Khovanov proves that the following sequence is exact:
$$ 0\rightarrow H^*(\widetilde{S}) \xrightarrow{\phi} \bigoplus_b H^*(S_b) \xrightarrow{\psi ^-} \bigoplus_{a<b} H^*(S_a\cap S_b).$$ The map $\phi$ is induced by the inclusions $S_b\subset \widetilde{S}$ and $\psi^{-} = \sum_{a<b} (\psi_{a,b} - \psi_{b,a})$ where 
$\psi_{a,b}:H^*(S_a)\rightarrow H^*(S_a\cap S_b)$ is induced by the inclusion $(S_b\cap S_a)\subset S_b$ and $\psi_{b,a}$ is analogously defined.

We wish to modify this sequence by restricting the codomain of $\psi^-$ to  $\bigoplus_{a\rightarrow b} H^*(S_a\cap S_b)$. In order to do this and maintain exactness, the kernel of $\psi^-$ must remain unchanged by this restriction. The following Lemma is a necessary tool in proving this fact.

\begin{lemma} \label{khdist} (Khovanov)
Given $a,b,c\in B^n$, if $d(a,c) = d(a, b) + d(b, c)$, then $S_a\cap S_c = S_a\cap S_b \cap S_c$.
\end{lemma}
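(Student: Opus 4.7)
The plan is to reduce the set-theoretic equality to a combinatorial claim about matchings and then induct on $d(a,b)$. The inclusion $S_a\cap S_b\cap S_c\subseteq S_a\cap S_c$ is automatic, so the task is to show $S_a\cap S_c\subseteq S_b$. A tuple lies in $S_a\cap S_c$ exactly when $x_i=x_j$ for every pair $(i,j)$ in a common connected component of the graph on $\{1,\ldots,2n\}$ with edge set $a\cup c$; since each vertex has degree one in $a$ and one in $c$, this graph is $2$-regular and its components are cycles that alternate between $a$- and $c$-edges. Consequently $S_a\cap S_c\subseteq S_b$ is equivalent to the combinatorial assertion that every arc of $b$ has both endpoints in a single cycle of $a\cup c$.

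To prove this I induct on $d(a,b)$. The base case $d(a,b)=0$ forces $a=b$ and is immediate. For the inductive step, pick $a'$ obtained from $a$ by a single $\to$-move (in either direction) with $d(a',b)=d(a,b)-1$; two applications of the triangle inequality force $d(a',c)=d(a,c)-1$, whence the hypothesis $d(a',c)=d(a',b)+d(b,c)$ persists and the inductive hypothesis gives $S_{a'}\cap S_c\subseteq S_b$. It therefore suffices to show $S_a\cap S_c\subseteq S_{a'}$, since this chains to $S_a\cap S_c\subseteq S_{a'}\cap S_c\subseteq S_b$. The matchings $a$ and $a'$ differ only at four indices $i<j<k<l$, with $a$ containing $(i,j),(k,l)$ and $a'$ containing $(i,l),(j,k)$, so the containment reduces to verifying that $i,l$ (and similarly $j,k$) lie in a common cycle of $a\cup c$.

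The crux, and the place I expect the actual work, is the following local analysis. A Pl\"ucker swap $(i,j),(k,l)\leftrightarrow (i,l),(j,k)$ has exactly two possible effects on the cycle decomposition of the $2$-regular graph formed with $c$: if the two $a$-edges being swapped lie in a common cycle of $a\cup c$, the swap \emph{splits} that cycle into two and increases the cycle count by one; if they lie in distinct cycles, the swap \emph{merges} them and decreases it by one. Since $a\to a'$ strictly decreases distance to $c$, the classical identity $d(\cdot,\cdot)=n-k(\cdot,\cdot)$ (where $k$ counts cycles in the union graph, and which itself follows from the fact that every $\to$-move changes $k$ by $\pm 1$) forces the splitting case. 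This places all four vertices $i,j,k,l$ in a single cycle of $a\cup c$, completing the induction. The main obstacle is carrying out this cycle bookkeeping precisely and pinning down the distance-versus-cycle-count correspondence; once the $d=n-k$ identity is in hand, everything else is routine.
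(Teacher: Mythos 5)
First, a point of comparison: the paper does not prove Lemma \ref{khdist} at all --- it is stated with the attribution ``(Khovanov)'' and imported from \cite{K2} --- so there is no in-paper proof to measure you against. Your architecture is nonetheless essentially Khovanov's: translate $S_a\cap S_c\subseteq S_b$ into the combinatorial statement that every arc of $b$ joins two nodes on a common alternating cycle of $a\cup c$, induct on $d(a,b)$, use the triangle inequality to pass the hypothesis $d(a',c)=d(a',b)+d(b,c)$ to a matching $a'$ one move closer to $b$, and reduce to showing that the four nodes of the move $a\leftrightarrow a'$ lie on one cycle of $a\cup c$. That reduction, the propagation of the hypothesis, and the final chaining $S_a\cap S_c\subseteq S_{a'}\cap S_c\subseteq S_b$ are all correct.

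The gap is exactly where you flagged it, and it is twofold. (i) The dichotomy ``same cycle $\Rightarrow$ split, different cycles $\Rightarrow$ merge'' is \emph{false} for an abstract $2$-regular graph: if deleting the two $a$-edges from their common cycle leaves paths joining $j$ to $l$ and $i$ to $k$ (rather than $j$ to $k$ and $i$ to $l$), the swap to $(i,l),(j,k)$ reconnects everything into a single cycle and leaves the cycle count unchanged. What saves the claim is planarity: drawing $a$ below the node line and the reflection of $c$ above it realizes $a\cup c$ as disjoint embedded circles in the plane, the legal move is an embedded surgery (the band avoids all other arcs precisely because both $a$ and $a'$ are crossingless), and an embedded planar surgery changes the number of circles by exactly $\pm1$. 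Without invoking planarity the induction step can fail. (ii) Even granting the dichotomy, $d=n-k$ does not follow from ``each move changes $k$ by $\pm1$'' alone; that yields only $d(a,c)\geq n-k(a,c)$. The reverse inequality requires exhibiting, for every $a\neq c$, a legal move on $a$ that splits a circle of $a\cup c$, a separate constructive argument --- and your final contradiction (playing $d(a',c)=d(a,c)-1$ against $k(a',c)=k(a,c)-1$) genuinely needs both inequalities. Both points are standard and fillable, but as written the load-bearing step of the proof is asserted rather than established.
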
 
Inductively, this means that if $d(a,c) = m$ with $(a=a_0, \ldots a_m=c)$ a corresponding minimal sequence, then $S_a\cap S_c = \bigcap_{i=0}^{m} S_{a_i}$. 

\begin{theorem}
Let $\widetilde{ \psi^-}:  \bigoplus_b H^*(S_b)  \rightarrow \bigoplus_{a\rightarrow b} H^*(S_a\cap S_b)$ be the map obtained by restriction of the codomain of $\psi^-$. Then $Ker(\psi^-)=Ker(\widetilde{ \psi^-})$.
\end{theorem}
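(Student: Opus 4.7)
The containment $Ker(\psi^-) \subseteq Ker(\widetilde{\psi^-})$ is immediate, since $\widetilde{\psi^-}$ is obtained from $\psi^-$ by projecting onto the summands corresponding to pairs $a\rightarrow b$.  For the reverse inclusion the plan is to induct on the distance $d(a,b)$, showing that for every $\alpha = (\alpha_b)_b \in Ker(\widetilde{\psi^-})$ and every pair $a<b$ in the total order, the $(a,b)$-component $\psi_{a,b}(\alpha_a) - \psi_{b,a}(\alpha_b)$ of $\psi^-(\alpha)$ vanishes in $H^*(S_a\cap S_b)$.

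The base case $d(a,b)=1$ uses the fact that $<$ extends the partial order $\prec$ generated by $\rightarrow$:  if $a<b$ and $d(a,b)=1$, then necessarily $a\rightarrow b$ (otherwise $b\rightarrow a$ would give $b\prec a$, hence $b<a$), so the pair $(a,b)$ already appears in the codomain of $\widetilde{\psi^-}$ and its component vanishes by hypothesis.  For the inductive step I would take $d(a,b)=d\geq 2$ together with a minimal sequence realizing this distance, and let $c$ be its first interior term, so that $d(a,c)=1$, $d(c,b)=d-1$, and $d(a,b)=d(a,c)+d(c,b)$.  Lemma \ref{khdist} then gives $S_a\cap S_b = S_a\cap S_c\cap S_b$, so the inclusion $S_a\cap S_b\hookrightarrow S_a$ factors through $S_a\cap S_c$, and symmetrically the inclusion into $S_b$ factors through $S_c\cap S_b$.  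Writing $\iota^{ab}_{ac}$ and $\iota^{ab}_{cb}$ for the induced restriction maps on cohomology, functoriality yields $\psi_{a,b}=\iota^{ab}_{ac}\circ\psi_{a,c}$ and $\psi_{b,a}=\iota^{ab}_{cb}\circ\psi_{b,c}$.

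By the inductive hypothesis applied to the pairs $\{a,c\}$ and $\{b,c\}$ (of distances $1$ and $d-1$), we get $\psi_{a,c}(\alpha_a)=\psi_{c,a}(\alpha_c)$ and $\psi_{b,c}(\alpha_b)=\psi_{c,b}(\alpha_c)$, irrespective of which element of each pair is smaller in $<$.  Substituting,
\[
\psi_{a,b}(\alpha_a)-\psi_{b,a}(\alpha_b) = \iota^{ab}_{ac}\bigl(\psi_{c,a}(\alpha_c)\bigr) - \iota^{ab}_{cb}\bigl(\psi_{c,b}(\alpha_c)\bigr).
\]
Both compositions $\iota^{ab}_{ac}\circ\psi_{c,a}$ and $\iota^{ab}_{cb}\circ\psi_{c,b}$ are, by functoriality, the single cohomology map induced by the inclusion $S_a\cap S_b\hookrightarrow S_c$ — merely factored two different ways — so the right-hand side vanishes.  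The main conceptual obstacle is this last step: once Lemma \ref{khdist} supplies the common intersection $S_a\cap S_c\cap S_b$ sitting inside $S_c$, the whole argument collapses to functoriality of cohomology.  The remainder is routine bookkeeping about how the total order $<$ interacts with the symmetric distance $d$.
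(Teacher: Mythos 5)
Your proposal is correct and follows essentially the same route as the paper: induction on the distance $d(a,b)$, with the base case resting on the fact that $<$ extends the order generated by $\rightarrow$, and the inductive step using Lemma \ref{khdist} to write $S_a\cap S_b=S_a\cap S_c\cap S_b$ and then functoriality of the induced maps. The only differences are cosmetic — you peel off the distance-one step at the $a$ end rather than the $b$ end, and you handle the ordering of the intermediate pairs by symmetry of the equality rather than by choosing $c$ with $a<c<b$.
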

\begin{proof}
Given matchings $a<b$, let $\iota_{a,b}$ be the map on homology induced by the inclusion $S_a \cap S_b \subset S_a$. Since $a<b$ whenever $a\rightarrow b$, $Ker(\psi^-) \subset Ker(\widetilde{ \psi^-})$. 

Assume that $x = \sum_{b\in B} x_b \in Ker(\widetilde{ \psi^-})$. Given any $a<b$, we must show that  $\psi_{a,b}(x_a) = \psi_{b,a}(x_b)$. In other words, $x_a\circ \iota_{a,b} = x_b \circ \iota_{b,a}$. This will be proven by induction on $d(a,b)$.

\underline{Base Case}: If $d(a,b) = 1$, then $a\rightarrow b$. Thus, $x_a \circ \iota_{a,b} = x_b \circ \iota_{b,a}$ by definition of $\widetilde{\psi^-}$.

\underline{Inductive Argument}: Assume for every $a<b$ with $d(a,b)<k$, $\psi_{a,b}(x_a) = \psi_{b,a}(x_b)$. Consider $a<b$ with $d(a,b) = k$. Then, there is some $c\in B$ with $a<c<b$, $d(a,c) = k-1$, and $d(c,b) = 1$.  From Lemma \ref{khdist}, $S_a\cap S_b = S_a\cap S_c \cap S_b$.  

Let $$\zeta_{a,c}: H_*(S_a\cap S_b) \rightarrow H_*(S_a\cap S_c) \text{ and } \zeta_{b,c}:H_*(S_a\cap S_b) \rightarrow H_*(S_c\cap S_b)$$ be the maps on homology induced by the inclusion of $S_a\cap S_b$ into $S_a\cap S_c$ and $S_c\cap S_b$.

Since $d(a,c) = k-1$, $x_a\circ \iota_{a,c} = x_c \circ \iota_{c,a}.$ Since $d(c,b)=1$, $x_c\circ \iota_{c,b} = x_b \circ \iota_{b,c}.$ Precomposing with inclusion maps gives the following two equations,  
$$x_a\circ \iota_{a,c} \circ \zeta_{a,c} = x_c \circ \iota_{c,a} \circ \zeta_{a,c}$$
$$x_c\circ \iota_{c,b} \circ \zeta_{b,c} = x_b \circ \iota_{b,c} \circ \zeta_{b,c}.$$ 
Furthermore, $\iota_{c,a}\circ \zeta_{a,c} = \iota_{c,b} \circ \zeta_{b,c}$.   The result is $$x_a\circ \iota_{a,c} \circ \zeta_{a,c} =  x_b \circ \iota_{b,c} \circ \zeta_{b,c}.$$ Using properties of composition once again, $\iota_{a,c} \circ \zeta_{a,c} = \iota_{a,b}$, and $\iota_{b,c} \circ \zeta_{b,c}= \iota_{b,a}.$ Finally, substitution yields $$x_a\circ \iota_{a,b}= x_b\circ \iota_{b,a}.$$ 

Thus, $Ker(\widetilde{\psi^-}) = Ker(\psi^-)$, as desired.  
\end{proof}
Consequently, the following sequence is exact: $$ 0\rightarrow H^*(\widetilde{S}) \xrightarrow{\phi} \bigoplus_b H^*(S_b) \xrightarrow{\widetilde{\psi ^-}} \bigoplus_{a\rightarrow b} H^*(S_a\cap S_b).$$

\subsection{The isomorphism}\label{theiso}

As discussed in Section \ref{const}, $H(\widetilde{S})$, $\bigoplus_b H(S_b)$, and $\bigoplus_{a\rightarrow b} H(S_a\cap S_b)$ are all finitely generated, free abelian groups. The dual sequence on homology is therefore also exact. In other words,
 $$ \bigoplus_{a\rightarrow b} H_*(S_a\cap S_b)\xrightarrow{\widetilde{\psi^-}} \bigoplus_b H_*(S_b) \xrightarrow{\phi} H_*(\tilde{S}) \rightarrow 0$$ is exact where $\phi$ and $\psi^-$ are induced by the same inclusions. 
 
 Examining this sequence, we see that $H_*(\tilde{S}) \cong \bigoplus_b H_*(S_b)/ Im(\psi^-)$. In Section \ref{inc}, we discussed the behavior of the inclusion maps $\psi_{a,b}$ and $\psi_{b,a}$ for $a\rightarrow b$. The generators of $H_*(S_a\cap S_b)$ corresponding to choice of a point in $S_{\Delta}^2$ are mapped to the generators of $H_*(S_a)$ and $H_*(S_b)$ corresponding to choice of a point in both of the spheres that intersect along their diagonal. The generators corresponding to the choice of a two-cell in $S_{\Delta}^2$ are mapped to the sum of the generators corresponding to the choice of a point in one sphere and a two-cell in the other.
 
The relationship between surfaces in the Bar-Natan skein module and generators of $\bigoplus_{b\in B^n} H_*(S_b)$ is described by the rules below. 
\begin{itemize}
\item Each space $S_b$ corresponds to a particular annular configuration. 
\item Each sphere in the cross-product $S_b$ corresponds to a particular annulus in that configuration. \item Recall that the generators for $H_*(S_b)$ correspond to the choice of a point or a two-cell for each sphere. Let choice of a point correspond to a dot on the associated annulus in the configuration and choice of a two-cell correspond to absence of a dot. 
\end{itemize}

As an example of this correspondence, let $b=$ \includegraphics[width=.5in]{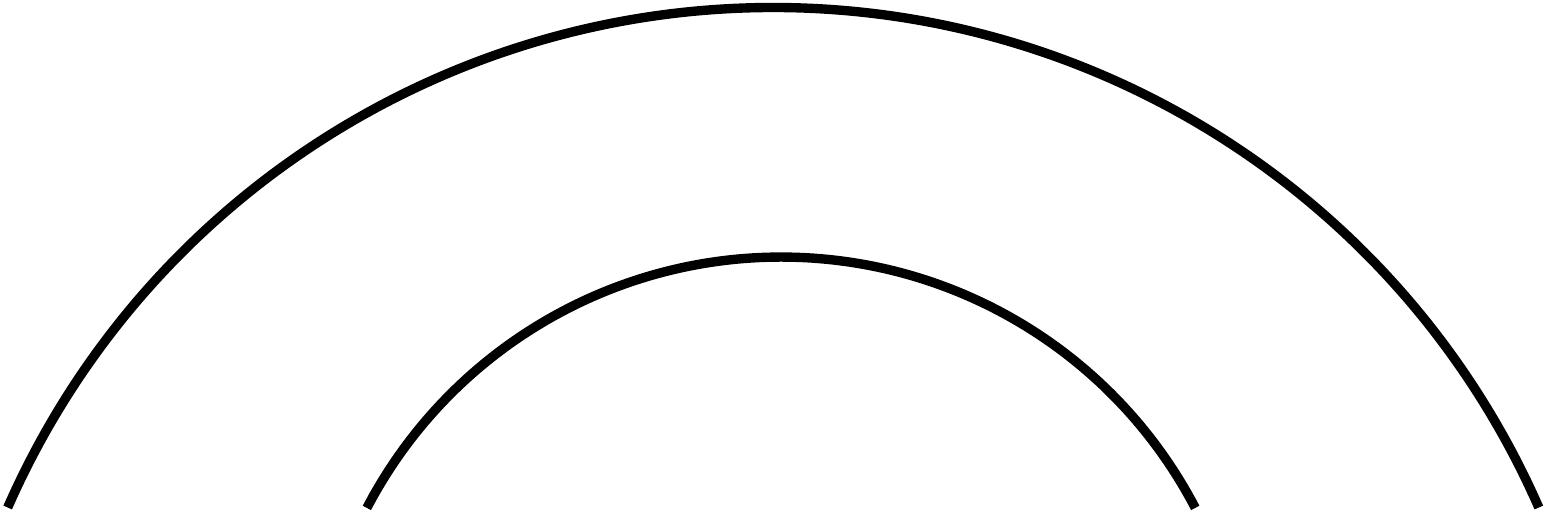}.  Then, $b$ corresponds to the nested annular configuration. The space $S_b$ has the form $ \{ (x,y,y,x): x,y\in S^2\}$, and we can write $S_b = S_{(1,4)}^2 \times S_{(2,3)}^2$. There are four nonzero markings of the nested configuration, and there are four generators of $H_*(S_b)$. We identify them as follows:
\begin{align*}
\includegraphics[width=.6in]{nest.pdf}& \text{ corresponds to } c\times c \\
\includegraphics[width=.6in]{nest2.pdf}& \text{ corresponds to } \{p \} \times c \\
\includegraphics[width=.6in]{nest1.pdf}& \text{ corresponds to } c\times \{ p \} \\
\includegraphics[width=.6in]{nest3.pdf}& \text{ corresponds to } \{ p \} \times \{ p \}
\end{align*}

Using these rules, there is a one to one correspondence between markings of a particular annular configuration and generators of $H_*(S_b)$. Extend this to all matchings to get a one-to-one correspondence between generators of $\bigoplus_{b} H_*(S_b)$ and elements of $\mathcal{F}_{inc}(A\times I, c_{2n})$. 

Now, we wish to compare $Im(\widetilde{\psi^-})$ to $\mathcal{S}(A\times I,c_{2n})$. We have proven that $\mathcal{S}(A\times I, c_{2n})$ is generated by Type I and Type II relations. We will show that the image of $\widetilde{\psi^-}$ is exactly generated by expressions that correspond to Type I and Type II relations.

Recall from Section \ref{inc} that for $a\rightarrow b$, every generator of $H_*(S_a\cap S_b)$ has the form $Q\times \{ p \}$ or $Q\times c$. The cell $Q$ is a generator for the homology of $(S^2)^{n-2}$, the cartesian product of the spheres that $S_a$ and $S_b$ have in common, while the final $\{ p \}$ or $c$ corresponds to the diagonal sphere.   Since the cells $Q\times \{ p \}$ and $Q\times c$ are the generators for $\bigoplus_{a< b} H_*(S_a\cap S_b)$, the image of those elements under $\widetilde{\psi^-}$ will generate $Im(\widetilde{\psi^-})$. 

Another fact shown in Section \ref{inc} is that $$\psi_{a,b}(Q\times \{p \} ) = Q\times \{ p \} \times \{ p \} \in H_*(S_a)$$ and $$\psi_{a,b}(Q\times c) = Q\times \{ p \} \times c + Q \times c\times \{ p \} \in H_*(S_a).$$ The map $\psi_{b,a}$ behaves identically but sends these generators to elements of $H_*(S_b)$. 

Recall that $\widetilde{\psi^-} = \sum_{a\rightarrow b} (\psi_{a,b} - \psi_{b,c})$, so
$$\widetilde{\psi^-}(Q\times \{ p \}) =  Q\times \{ p \} \times \{ p \}  - Q\times \{ p \} \times \{ p \} \in H_*(S_a) \oplus H_*(S_b)$$
$$\widetilde{\psi^-}(Q\times c) = \Big( Q\times \{ p \} \times c + Q \times c\times \{ p \} \Big) - \Big( Q\times \{ p \} \times c + Q \times c\times \{ p \} \Big).$$

In the context of the Bar-Natan skein module, $Q$ represents an arbitrary marked configuration of $n-2$ annuli and is fixed by $\widetilde{\psi^-}$. Extending the diagrammatic correspondence described earlier to the image of these generators gives the following

\begin{equation}\label{diatype2}
\widetilde{\psi^-}(Q\times \{ p \}) = \raisebox{-5pt}{\includegraphics[height=.3in]{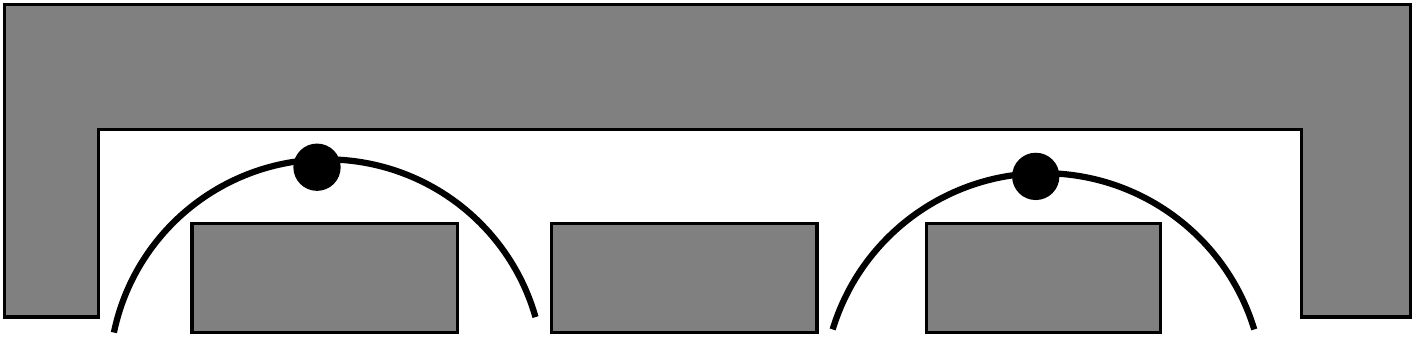}}  -  \raisebox{-5pt}{\includegraphics[height=.3in]{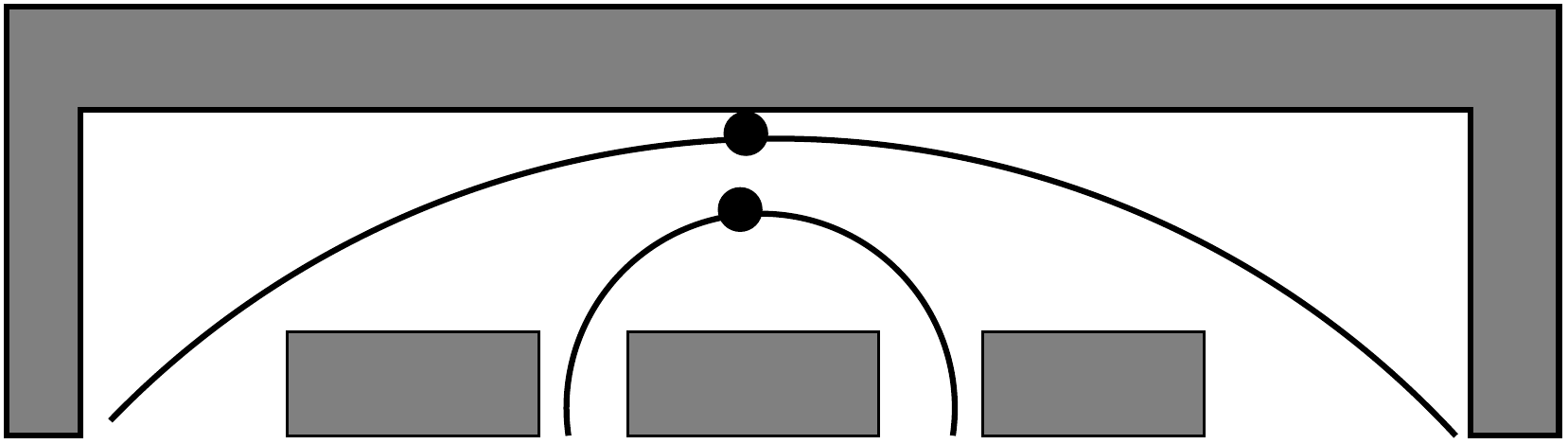}}
\end{equation}
\begin{equation}\label{diatype1}
\widetilde{\psi^-}(Q\times c) = (\raisebox{-5pt}{\includegraphics[height=.27in]{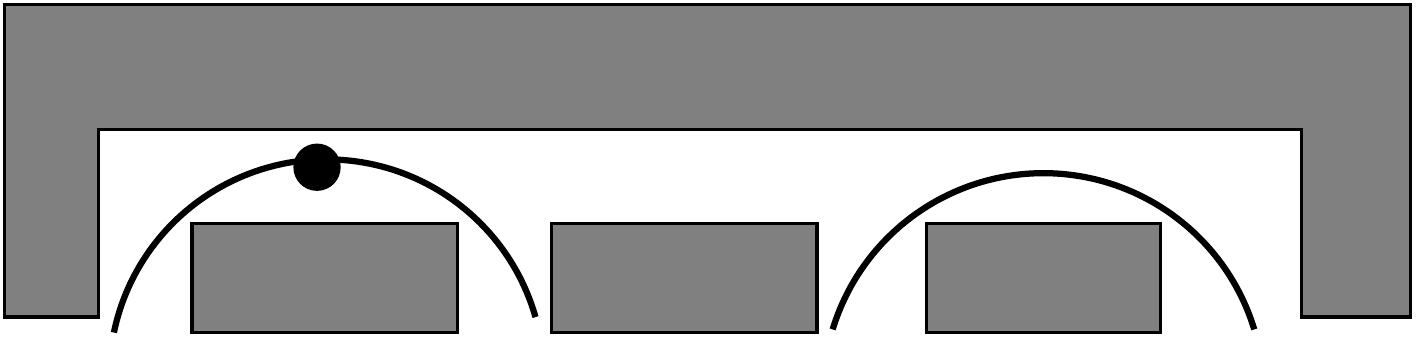}} + \raisebox{-5pt}{\includegraphics[height=.27in]{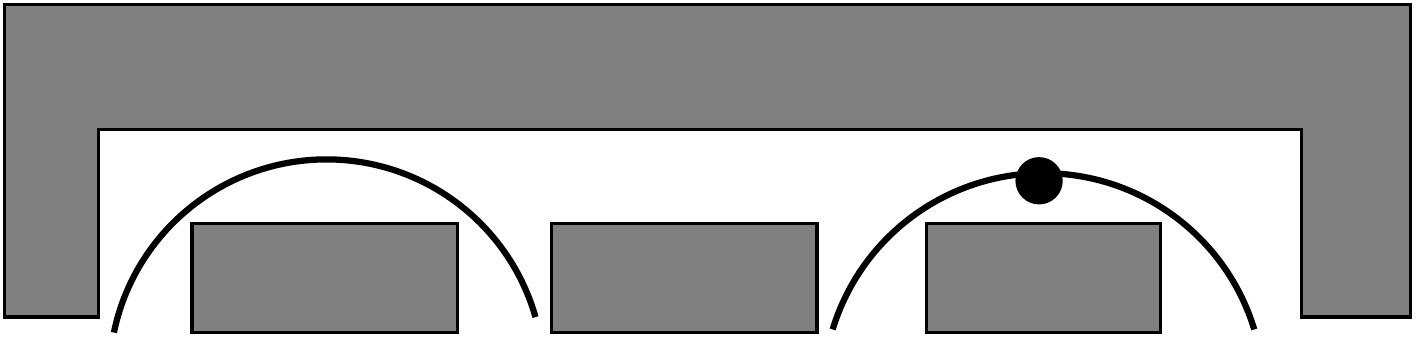}}) -  (\raisebox{-5pt}{\includegraphics[height=.27in]{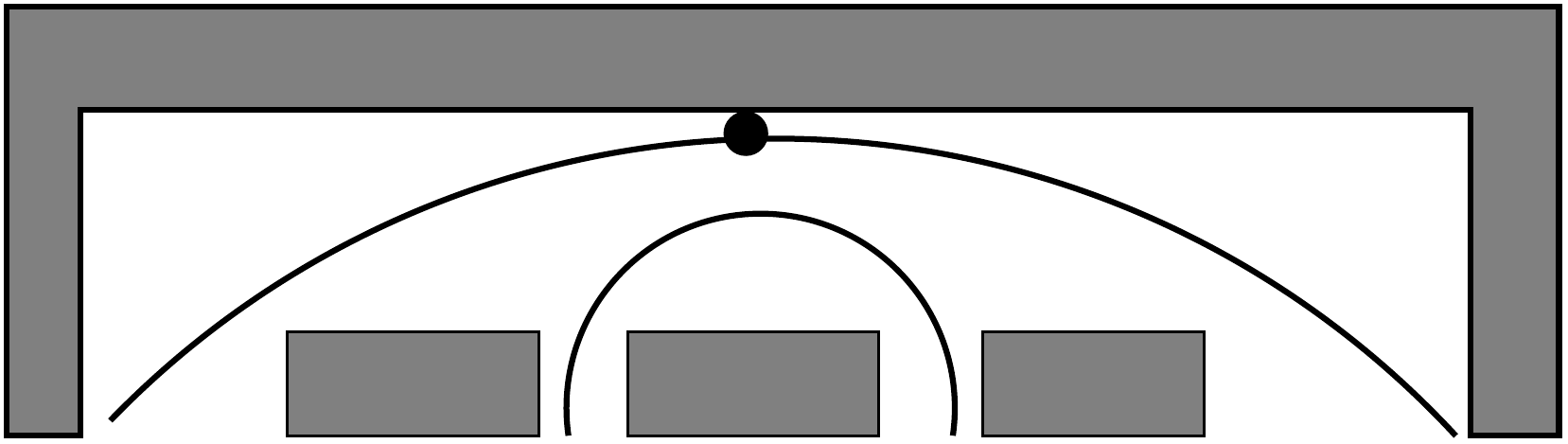}} + \raisebox{-5pt}{\includegraphics[height=.27in]{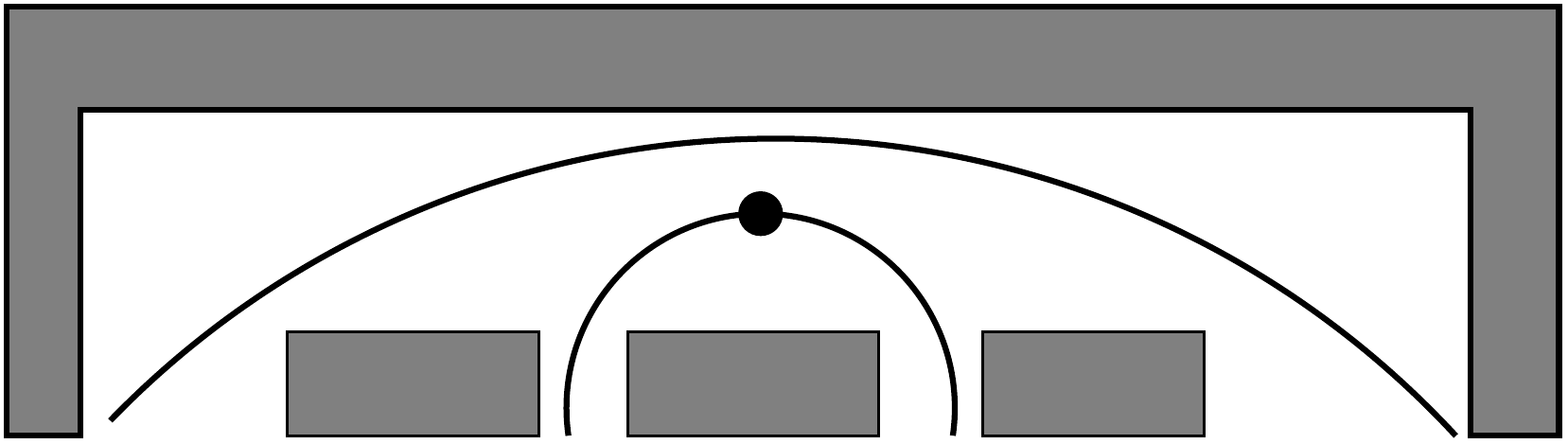}})
\end{equation}

But Eq. \ref{diatype2} is just the diagrammatic version of the Type II relation, and Eq. \ref{diatype1} is the diagrammatic version of the  Type I relation. Thus, there is a one to one correspondence between elements of $Im(\widetilde{\psi^-})$ and $\mathcal{S}(A\times I, c_{2n})$. This allows us to conclude that $\bigoplus_{b} H(S_b)/ Im(\psi^-) \cong \mathbb{Z}\mathcal{F}(A\times I,c_{2n})/ \mathcal{S}(A\times I,c_{2n})$. Via \cite{K2},  both are isomorphic to the total homology of the $(n,n)$ Springer variety of complete flags in $\mathbb{C}^{2n}$ fixed by a nilpotent matrix with two Jordan blocks of size $n$. This completes the proof of the following result:
\begin{maintheorem}
 The Bar-Natan skein module of the solid torus with boundary curve system $2n$ copies of the longitude is isomorphic to the total homology of the $(n,n)$ Springer variety. 
 \end{maintheorem}

\subsection{On degree oddity} 
 The notions of degree in the Bar-Natan skein module and $H_*(\widetilde{S})$ are not preserved by the correspondence of Section \ref{theiso}. Recall that, in the Bar-Natan skein module, the degree of a marked surface $S$ is $2d-\chi (S)$ where $d$ is the number of dots and $\chi$ is the usual Euler characteristic. Since $\mathcal{BN}(A\times I, c_{2n}, \mathbb{Z})$ is generated by disjoint unions of marked annuli, the formula for degree on incompressible surfaces simplifies to deg($S$) = $2d$. A single annulus has degree $0$ if undotted and degree $2$ if it carries one dot. It cannot carry more than one dot because of the relation TD.

In the correspondence of Section \ref{theiso}, a dotted annulus in the Bar-Natan skein module is associated to $\{ p \} \in H_*(S^2)$ while an undotted annulus corresponds to $c\in H_*(S^2)$. Thus, we are mapping elements of degree 2 to elements of degree 0 and vice versa. Any behavior that the Bar-Natan skein module might inherit from $H_*(\widetilde{S})$ and thus from the homology of the $(n,n)$ Springer variety will, therefore, be upside down since we are matching up opposing elements.

To get an idea of why this is necessary, consider the graded rank of $H^*(\widetilde{S})$ versus that of the Bar-Natan skein module. By Lemma \ref{alldot}, there is only one generator for the Bar-Natan skein module in degree $2n$. There are $C_n = \frac{1}{n+1}\binom{2n}{n}$ generators in degree 0 since there are no relations between undotted incompressible surfaces. On the other hand, $H_*(\widetilde{S})$ has only one generator in degree 0 because $\widetilde{S}$ is connected.  This degree oddity could be fixed by defining the degree of marked, incompressible surfaces in $\mathcal{BN}(A\times I, c_{2n}, \mathbb{Z})$ to be $2n -2d$. We choose not to do this because it disagrees with the existing literature.
\section{Algebraic properties of the Bar-Natan skein module}\label{alg}
In \cite{K1}, Khovanov defines a graded ring $H^n$ that is naturally isomorphic to $\bigoplus_{a} H^*(S_a)$ and in \cite{K2} proves that the cohomology of the $(n,n)$ Springer variety is isomorphic to the center of $H^n$.  Multiplication in the center of $H^n$ can be expressed as an operation on dotted crossingless matchings. In this section, we give a comultiplication in the Bar-Natan skein module that can also be stated using the language of marked crossingless matchings.

The algebraic structure described in \cite{K2} together with the comultiplication defined here might seem to hint at a Frobenius structure within the homology of the $(n,n)$ Springer variety. In this section, we explain why this fails to be true.
\subsection{Comultiplication}

The goal of this section is to prove the following 
 \begin{theorem}\label{comult}
 The Bar-Natan skein module $\mathcal{BN}(A\times I, c_{2n}, \mathbb{Z})$ has a well-defined comultiplication.
 \end{theorem}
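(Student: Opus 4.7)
The plan is to define $\Delta$ on the generating set of marked annular configurations from Section \ref{bnmod} by applying the Frobenius coproduct annulus by annulus, extend $\mathbb{Z}$-linearly, and then verify that the resulting map descends through the quotient by $\mathcal{S}(A\times I, c_{2n})$. Explicitly, treat each annulus in a configuration as a copy of $\mathcal{A} = \mathbb{Z}[x]/(x^2)$ (undotted $\leftrightarrow 1$, dotted $\leftrightarrow x$) and apply $\Delta_{\mathcal{A}}(1) = 1\otimes x + x\otimes 1$, $\Delta_{\mathcal{A}}(x) = x\otimes x$ independently on each annulus. The output is naturally a $\mathbb{Z}$-linear combination of pairs of markings of the same underlying configuration, that is, an element of $\mathbb{Z}\mathcal{F}_{inc}(A\times I, c_{2n}) \otimes_{\mathbb{Z}} \mathbb{Z}\mathcal{F}_{inc}(A\times I, c_{2n})$.

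By Theorem \ref{reltypes}, it suffices to verify $\Delta(r) \in \mathcal{S}\otimes \mathbb{Z}\mathcal{F}_{inc} + \mathbb{Z}\mathcal{F}_{inc}\otimes \mathcal{S}$ for each Type I and Type II relation $r$. Both relation types are local in the strong sense that they modify only two adjacent annuli while fixing all others, and $\Delta$ acts independently on each annulus; the untouched annuli therefore contribute a common tensor factor on each side, and the check localizes to the $n = 2$ case. Writing $\dot L\dot R, \dot L R, L\dot R$ for markings of the unnested configuration and $\dot O\dot I, \dot O I, O\dot I$ for markings of the nested configuration, the Type II relation reads $\dot L\dot R = \dot O\dot I$, and applying $\Delta$ to each side gives $\Delta(\dot L\dot R) - \Delta(\dot O\dot I) = (\dot L\dot R - \dot O\dot I)\otimes \dot L\dot R + \dot O\dot I\otimes(\dot L\dot R - \dot O\dot I)$, which lies in $\mathcal{S}\otimes \mathbb{Z}\mathcal{F}_{inc} + \mathbb{Z}\mathcal{F}_{inc}\otimes \mathcal{S}$ by Type II itself. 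The Type I relation $\dot L R + L\dot R = \dot O I + O\dot I$ yields $\Delta(\mathrm{LHS}) = (\dot L R + L\dot R)\otimes \dot L\dot R + \dot L\dot R\otimes(\dot L R + L\dot R)$ and similarly on the right; the difference splits bilinearly by identifying $(\dot L R + L\dot R) - (\dot O I + O\dot I)$ as a Type I relation and $\dot L\dot R - \dot O\dot I$ as a consequence of Lemma \ref{alldot}, landing again in $\mathcal{S}\otimes \mathbb{Z}\mathcal{F}_{inc} + \mathbb{Z}\mathcal{F}_{inc}\otimes \mathcal{S}$.

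The main obstacle is the bookkeeping in the Type I expansion: one must keep careful track of which annulus carries a dot in each tensor slot and recognize every cancellation as an instance of the two relation families, invoking the identity $a\otimes b - a'\otimes b' = (a - a')\otimes b + a'\otimes(b - b')$ to extract the pieces that sit in $\mathcal{S}$ on either side. Once the $n = 2$ case is carried out, the locality reduction extends the conclusion to all $n$, and $\Delta$ descends to a well-defined comultiplication on $\mathcal{BN}(A\times I, c_{2n}, \mathbb{Z})$, proving Theorem \ref{comult}.
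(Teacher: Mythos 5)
Your proposal is correct and follows essentially the same route as the paper: the coproduct formulas you write down (dot $\mapsto$ dot$\otimes$dot, no dot $\mapsto$ dot$\otimes$(no dot)$+$(no dot)$\otimes$dot) are exactly what the paper obtains from the diagonal map and Eilenberg--Zilber isomorphism on $H_*(S_a)$, and the well-definedness check is the same reduction via Theorem \ref{reltypes} and locality to the $n=2$ computation on the Type I and Type II generators. The only cosmetic difference is that you define $\Delta$ directly by the Frobenius coproduct on each annulus rather than deriving it topologically, and for $n=2$ the identity $\dot L\dot R-\dot O\dot I\in\mathcal{S}$ is just the Type II relation itself rather than an appeal to Lemma \ref{alldot}.
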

Given a toplogical space $X$, the diagonal map $\widetilde{\Delta} : X\rightarrow X\times X$ induces a map on homology
$$\widetilde{\Delta} : H_*(X) \rightarrow H_*(X\times X)$$
 In Section \ref{inc}, we discussed how this map behaves when $X=S^2$. Recall that the cells $\{ p \}$ and $c$ are generators of homology in $S^2$ and $\widetilde{\Delta}(\{ p \}) = \{ p \} \times \{ p \}$ while $\widetilde{\Delta} (c) = \{ p \} \times c + c\times \{ p \}$. This extends to a diagonal mapping on $H_*(S_a)$ in the natural way, since $S_a$ is diffeomorphic to the cartesian product of $n$ two-spheres.
 
Recall that the Eilenberg-Zilber map on torsion-free homology is an isomorphism
$$EZ: H_*(X\times X) \rightarrow H_*(X)\otimes H_*(X)$$

Putting the Eilenberg-Zilber map together with the diagonal map on homology, we get a comultiplication on the homology of $S_a$.
$$\Delta :H_*(S_a) \rightarrow H_*(S_a)\otimes H_*(S_a)$$

This comultiplication mapping can be represented using dotted, crossingless matchings as shown below. Recall that each arc in the matching $a$ represents a sphere in the cartesian product $S_a$. Dots on arcs represent a choice of $\{ p \}$ in the cartesian product cell decomposition, and undotted arcs represent a choice of $c$.

Consider the unnested matching on 4 nodes. Then the comutiplication map behaves as follows
\begin{eqnarray*}
\Delta(\includegraphics[width=.5in]{unnest3.pdf} )&=& (\includegraphics[width=.5in]{unnest3.pdf} \otimes \includegraphics[width=.5in]{unnest3.pdf}) \\
\Delta(\includegraphics[width=.5in]{unnest1.pdf} )&=& (\includegraphics[width=.5in]{unnest1.pdf} \otimes \includegraphics[width=.5in]{unnest3.pdf}) +  (\includegraphics[width=.5in]{unnest3.pdf} \otimes \includegraphics[width=.5in]{unnest1.pdf}) \\
\Delta(\includegraphics[width=.5in]{unnest2.pdf} )&=& (\includegraphics[width=.5in]{unnest2.pdf} \otimes \includegraphics[width=.5in]{unnest3.pdf}) +  (\includegraphics[width=.5in]{unnest3.pdf} \otimes \includegraphics[width=.5in]{unnest2.pdf}) \\
\Delta(\includegraphics[width=.5in]{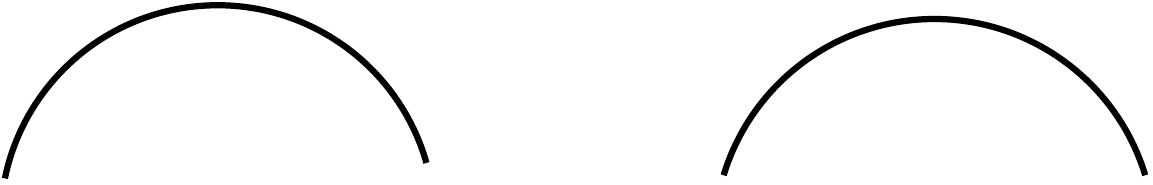} )&=& (\includegraphics[width=.5in]{unnest3.pdf} \otimes \includegraphics[width=.5in]{revunnest0.pdf})+ (\includegraphics[width=.5in]{unnest1.pdf} \otimes \includegraphics[width=.5in]{unnest2.pdf}) + (\includegraphics[width=.5in]{unnest2.pdf} \otimes \includegraphics[width=.5in]{unnest1.pdf}) + (\includegraphics[width=.5in]{revunnest0.pdf} \otimes \includegraphics[width=.5in]{unnest3.pdf}) 
\end{eqnarray*}

Extend this map linearly to $\bigoplus_{a\in B^n} H_*(S_a)$. After composing with inclusion into the appropriate tensor product,  the resulting map, denoted again by $\Delta$, is
$$\Delta: \bigoplus_{a\in B^n} H_*(S_a) \rightarrow \bigoplus_{a\in B^n} (H_*(S_a) \otimes H_*(S_a)) \hookrightarrow (\bigoplus_{a\in B^n} H_*(S_a)) \otimes (\bigoplus_{a\in B^n} H_*(S_a))$$

Recall the following exact sequence from Section \ref{theiso}
$$ \bigoplus_{a\rightarrow b} H_*(S_a\cap S_b)\xrightarrow{\widetilde{\psi^-}} \bigoplus_a H_*(S_a) \xrightarrow{\phi} H_*(\widetilde{S}) \rightarrow 0.$$

We wish to prove that the map $\Delta$ is well-defined on the quotient of $\bigoplus_{a\in B^n} H_*(S_a)$ by $Im(\widetilde{\psi})$. If this is a well-defined mapping, then there is a well-defined comultiplication on the Bar-Natan skein module.  

We prove this by showing that everything in $Im(\widetilde{\psi})$ is mapped by $\Delta$ to $$Im(\widetilde{\psi}) \otimes (\bigoplus_{a\in B^n} H_*(S_a)) + (\bigoplus_{a\in B^n} H_*(S_a)) \otimes Im(\widetilde{\psi}).$$ 

Theorem \ref{reltypes} guarantees that every relation between incompressible surfaces in the Bar-Natan skein module is a linear combination of Type I and Type II relations. Each of these relations involves interaction only between two adjacent arcs.  Thus, it is enough to prove that $\Delta$ is well-defined in the $n=2$ case.

In this case, $Im(\widetilde{\psi})$ is generated by two expressions
\begin{equation*}
\raisebox{-5pt}{\includegraphics[width=.6in]{unnest1.pdf}} + \raisebox{-5pt}{\includegraphics[width=.6in]{unnest2.pdf}} \hspace{.1in} - \hspace{.1in} \raisebox{-10pt}{ \includegraphics[width=.6in]{nest1.pdf}} - \raisebox{-10pt}{\includegraphics[width=.6in]{nest2.pdf}}
\end{equation*}
\begin{equation*}
\raisebox{-5pt}{\includegraphics[width=.6in]{unnest3.pdf}} \hspace{.1in} - \raisebox{-10pt}{ \includegraphics[width=.6in]{nest3.pdf}}
\end{equation*}

Applying $\Delta$ to the first expression, we get
\begin{eqnarray*}
\Delta(\includegraphics[width=.5in]{unnest1.pdf} &+& \includegraphics[width=.5in]{unnest2.pdf} \hspace{.1in} - \hspace{.1in}  \includegraphics[width=.5in]{nest1.pdf} - \includegraphics[width=.5in]{nest2.pdf})  =  \\ 
(&\includegraphics[width=.5in]{unnest1.pdf}& \otimes \includegraphics[width=.5in]{unnest3.pdf}) +  (\includegraphics[width=.5in]{unnest3.pdf} \otimes \includegraphics[width=.5in]{unnest1.pdf}) 
 + (\includegraphics[width=.5in]{unnest2.pdf} \otimes \includegraphics[width=.5in]{unnest3.pdf}) +  (\includegraphics[width=.5in]{unnest3.pdf} \otimes \includegraphics[width=.5in]{unnest2.pdf}) \\
-(&\includegraphics[width=.5in]{nest3.pdf}& \otimes \includegraphics[width=.5in]{nest1.pdf})  -  
(\includegraphics[width=.5in]{nest1.pdf} \otimes \includegraphics[width=.5in]{nest3.pdf})
-(\includegraphics[width=.5in]{nest2.pdf} \otimes \includegraphics[width=.5in]{nest3.pdf}) -  (\includegraphics[width=.5in]{nest3.pdf} \otimes \includegraphics[width=.5in]{nest2.pdf})
\end{eqnarray*}

By adding zero in the appropriate way, we get
\begin{eqnarray*}
 (\includegraphics[width=.5in]{unnest1.pdf}+\includegraphics[width=.5in]{unnest2.pdf}) \otimes \includegraphics[width=.5in]{unnest3.pdf} &+& \includegraphics[width=.5in]{unnest3.pdf} \otimes (\includegraphics[width=.5in]{unnest1.pdf}+\includegraphics[width=.5in]{unnest2.pdf}) \\
- (\includegraphics[width=.5in]{nest1.pdf}+\includegraphics[width=.5in]{nest2.pdf}) \otimes \includegraphics[width=.5in]{nest3.pdf} &-& \includegraphics[width=.5in]{nest3.pdf} \otimes (\includegraphics[width=.5in]{nest1.pdf}+\includegraphics[width=.5in]{nest2.pdf}) \\
+ (\includegraphics[width=.5in]{nest1.pdf}+\includegraphics[width=.5in]{nest2.pdf})\otimes \includegraphics[width=.5in]{unnest3.pdf} &-& (\includegraphics[width=.5in]{nest1.pdf}+\includegraphics[width=.5in]{nest2.pdf})\otimes \includegraphics[width=.5in]{unnest3.pdf} \\
+ \includegraphics[width=.5in]{unnest3.pdf} \otimes  (\includegraphics[width=.5in]{nest1.pdf}+\includegraphics[width=.5in]{nest2.pdf}) &-& \includegraphics[width=.5in]{unnest3.pdf} \otimes  (\includegraphics[width=.5in]{nest1.pdf}+\includegraphics[width=.5in]{nest2.pdf})
\end{eqnarray*}

Regrouping gives
\begin{eqnarray*}
(\text{Type I Relation}) \otimes  \includegraphics[width=.5in]{unnest3.pdf} &+&  \includegraphics[width=.5in]{unnest3.pdf} \otimes (\text{Type I Relation})\\
+ (\text{Type II Relation}) \otimes (\includegraphics[width=.5in]{nest1.pdf}+\includegraphics[width=.5in]{nest2.pdf}) &+& (\includegraphics[width=.5in]{nest1.pdf}+\includegraphics[width=.5in]{nest2.pdf})\otimes (\text{Type II Relation})
\end{eqnarray*}

Applying $\Delta$ to the second expression, we get: 
$$\Delta(\includegraphics[width=.5in]{unnest3.pdf} - \includegraphics[width=.5in]{nest3.pdf})  =   
\includegraphics[width=.5in]{unnest3.pdf} \otimes \includegraphics[width=.5in]{unnest3.pdf} -  \includegraphics[width=.5in]{nest3.pdf} \otimes \includegraphics[width=.5in]{nest3.pdf}$$

Once again, adding zero in the appropriate way gives
\begin{eqnarray*}
\includegraphics[width=.5in]{unnest3.pdf} \otimes \includegraphics[width=.5in]{unnest3.pdf} -  \includegraphics[width=.5in]{nest3.pdf} \otimes \includegraphics[width=.5in]{nest3.pdf} &+& \includegraphics[width=.5in]{unnest3.pdf} \otimes \includegraphics[width=.5in]{nest3.pdf} -  \includegraphics[width=.5in]{unnest3.pdf} \otimes \includegraphics[width=.5in]{nest3.pdf} = \\
\includegraphics[width=.5in]{unnest3.pdf} \otimes (\includegraphics[width=.5in]{unnest3.pdf} - \includegraphics[width=.5in]{nest3.pdf}) &+& (\includegraphics[width=.5in]{unnest3.pdf} - \includegraphics[width=.5in]{nest3.pdf}) \otimes \includegraphics[width=.5in]{nest3.pdf}  = \\
\includegraphics[width=.5in]{unnest3.pdf} \otimes (\text{Type II Relation}) &+& (\text{Type II Relation}) \otimes \includegraphics[width=.5in]{nest3.pdf} 
\end{eqnarray*}
 
 From these calculations, we conclude that $\Delta(\text{Im}(\widetilde{\psi})) \subseteq \text{Im}(\widetilde{\psi}) \otimes (\bigoplus_{a\in B^n} H_*(S_a)) + (\bigoplus_{a\in B^n} H_*(S_a)) \otimes \text{Im}(\widetilde{\psi})$, and so Theorem \ref{comult} is proven. 

  \subsection{Obstructions to a Frobenius structure}
A cohomology ring admits a Frobenius structure if and only if its associated homology ring admits one. Additionally, any manifold with cohomology only in even dimensions and with no torsion admits a Frobenius structure.

The $(n,n)$ Springer variety has torsion-free cohomology lying only in even dimensions, but it is not a manifold. One might hope that its cohomology ring might still be a Frobenius extension of $\mathbb{Z}$, however this turns out not to be true. 

From \cite{K2}, the cohomology ring of the $(n,n)$ Springer variety has the following polynomial ring presentation
$$ S = \mathbb{Z}[x_1, \ldots , x_{2n}] / R_1 \cong H^*(B(n,n), \mathbb{Z}) $$ where $R_1$ is generated by $x_i^2$ for all $1\leq i \leq 2n$ and the elementary symmetric polynomials  of degree $1\leq k\leq 2n$. All variables are taken to have degree 2. Also from \cite{K2}, any basis $\{v_1, \ldots , v_m\}$ for $S$ as a $\mathbb{Z}$-module has two generators in degree $n$, the top degree. Let these two generators be denoted $v$ and $v'$.  There is only one generator in degree 0, namely the element 1. 

If $S$ were a Frobenius system with respect to some linear functional $\epsilon : S \rightarrow \mathbb{Z}$, then there would exist a dual basis $\{ u_1, \ldots u_m \}$ such that $\epsilon (u_i v_j) = \delta_{ij}$. Let $u$ and $u'$ be the elements of the dual basis paired with $v$ and $v'$ respectively.  Given any polynomial $f$, the products $fv$ and $fv'$ are nonzero in $S$ if and only if $f$ is a scalar. This is a result of $v$ and $v'$ being top-dimensional. Since $u$ and $u'$ cannot both be scalars,  either $\epsilon (uv) = 0$ or $\epsilon (u'v') = 0$. The result is that, regardless of choice of linear functional $\epsilon$ and basis $\{v_1, \ldots , v_m\}$ for $S$, there does not exist a dual basis with the required properties. 

Since a dual basis can never exist, the ring $S\cong H^*(B(n,n))$ is not a Frobenius extension. Therefore, $H_*(B(n,n))$ is also not a Frobenius extension. The result is that the Bar-Natan skein module is not a Frobenius extension as viewed with the structure coming from homology.

\end{document}